\documentclass[12pt,reqno]{amsart}
\usepackage[dvips]{graphics}
\usepackage{amssymb}
\addtolength{\oddsidemargin}{-.7in}
\addtolength{\evensidemargin}{-.7in}
\addtolength{\textwidth}{1.2in}
\addtolength{\topmargin}{-.5in}
\addtolength{\textheight}{1in}
\setlength{\footskip}{1.8cm}

\numberwithin{equation}{section}

\newtheorem{thm}{Theorem}[section]
\newtheorem*{thm*}{Theorem}
\newtheorem*{thmmain*}{MAIN THEOREM}
\newtheorem{lem}[thm]{Lemma}
\newtheorem{cor}[thm]{Corollary}
\newtheorem{prop}[thm]{Proposition}
\newtheorem*{prop*}{Proposition}

\theoremstyle{definition}

\theoremstyle{remark}
\newtheorem{rem}[thm]{Remark}
\newtheorem{ex}[thm]{Example}
\newtheorem{quest}[thm]{Question}

\newcommand{\tref}[1]{Theorem~\ref{#1}}
\newcommand{\cref}[1]{Corollary~\ref{#1}}
\newcommand{\pref}[1]{Proposition~\ref{#1}}

\newcommand{\lref}[1]{Lemma~\ref{#1}}

\def\Hom{{\mathop{\text{Hom}}}}

\pagestyle{plain}

\begin{document}
\title{Riemannian foliations on contractible manifolds}

\author{Luis Florit}
\thanks{The first and fourth-named authors were partially supported by
CNPq, Brazil.}
\address{Luis A.~Florit, IMPA; Est.~Dona Castorina, 110 22460-320, Rio de
Janeiro, RJ, Brazil}
\email {luis@impa.br}
\author{Oliver Goertsches}
\address{Oliver Goertsches, Fachbereich Mathematik, Universit\"at
Hamburg; Bundesstra\ss e 55, 20146 Hamburg, Germany}
\email{oliver.goertsches@math.uni-hamburg.de}
\author{Alexander Lytchak}
\thanks{The third-named author was partially supported by a Heisenberg grant of the
DFG and by the SFB~``Groups, Geometry and Actions''}
\address{Alexander Lytchak, Mathematisches Institut, Universit\"at
K\"oln; Weyertal 86-90, 50931 K\"oln, Germany}
\email{alytchak@math.uni-koeln.de}
\author{Dirk T\"oben}
\address{Dirk T\"oben, Departamento de Matem\'atica, Universidade Federal
S\~ao Carlos; Rod. Washington Luiz, 13565-905, S\~ao Carlos, SP, Brazil}
\email{dirktoben@dm.ufscar.br}

\subjclass[2000]{53C12, 53C20}


\begin{abstract}
We prove that Riemannian foliations on complete contractible manifolds have a closed leaf, and that all leaves are closed if one closed leaf has a  finitely generated fundamental group. Under additional topological or geometric
assumptions we prove that the foliation is also simple.

\end{abstract}


\maketitle
\renewcommand{\theequation}{\arabic{section}.\arabic{equation}}

\pagenumbering{arabic}

\section{Introduction}
In \cite{ghys} E. Ghys proved very powerful approximation theorems for
Riemannian foliations on compact simply connected manifolds. As an
important application he deduced that for any Riemannian foliation on a
rational homology sphere with leaf dimension larger than one,  all leaves
must be compact rational homology spheres. Unfortunately, his methods (and the powerful methods developed
later in \cite{salem}) do not apply to Riemannian foliations on complete
non-compact Riemannian manifolds, essentially because Tischler's Theorem
(\cite{Tischler}) applies to compact manifolds only. But complete
non-compact manifolds are very important in Riemannian geometry, most
notably the Euclidean space, whose Riemannian foliations have been
studied in several papers (cf. \cite{gromollEuc1}, \cite{gromollEuc2}).

In this note, we use more abstract topological methods to prove a
non-compact analogue of the Theorem of Ghys on foliations of rational
homology spheres mentioned above. In order to formulate the result, we
say that a topological space is \emph{rationally contractible} if all of
its homotopy groups are torsion groups.

\begin{thm} \label{mainthm}
Let $M$ be a simply connected, rationally contractible, complete
Riemannian manifold and let $\mathcal F$ be a Riemannian foliation on
$M$.  Then  there exists a closed leaf which is   rationally contractible.
Moreover, if there exists a closed leaf with finitely generated fundamental group, then all leaves are closed    and the quotient space $M/{\mathcal F}$ is also rationally contractible.
\end{thm}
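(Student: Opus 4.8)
The plan is to replace the compactness-dependent machinery of Tischler, Ghys and Salem by Molino's structure theory together with a global developing-map argument; the latter is exactly where the hypotheses $H^1(M;\mathbb{R})=0$, and more generally $\pi_*(M)\otimes\mathbb{Q}=0$, will be spent.

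\textbf{Setup and the key lemma.} Since $M$ is complete, $\mathcal F$ is a complete Riemannian foliation, so Molino's theory applies on the transverse orthonormal frame bundle $p\colon\widehat M\to M$ (a principal $O(q)$-bundle, $q=\codim\mathcal F$): the lifted foliation $\widehat{\mathcal F}$ is transversally parallelizable, its leaf closures are the fibres of a locally trivial fibration $\varpi\colon\widehat M\to W$ onto a manifold $W$, all diffeomorphic to a manifold $F$ carrying a Lie foliation with dense leaves and structural Lie algebra $\mathfrak g$, and the residual $O(q)$-action on $W$ has quotient $M/\overline{\mathcal F}$. Two elementary remarks: $(i)$ a leaf of $\mathcal F$ is closed iff its $p$-preimage is a union of fibres of $\varpi$; in particular $\mathfrak g=0$ forces all leaves of $\mathcal F$ to be closed, by pushing closed $\widehat{\mathcal F}$-leaves down through the proper map $p$; $(ii)$ each leaf closure $\overline L\subset M$ is a closed, hence complete, submanifold on which $\mathcal F$ restricts to a foliation with dense leaves. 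The lemma I want is: \emph{a Lie foliation with dense leaves on a complete simply connected manifold is the one-leaf foliation}. Indeed simple connectivity gives a global developing map to the simply connected group $G$ integrating its structural algebra; completeness of the transverse parallelism turns this into a locally trivial fibration whose fibres are the leaves; so every leaf is closed, and density forces $G$ to be a point. When $\mathfrak g$ is abelian this is nothing but ``$H^1=0\Rightarrow$ the transverse closed $1$-forms are exact'', the contractible-target replacement of Tischler's step.

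\textbf{Existence of a closed, rationally contractible leaf.} The lemma cannot be applied to $\widehat M$ directly, since $\widehat M$ is only rationally $O(q)$ (read off from $O(q)\to\widehat M\to M$ using $\pi_*(M)\otimes\mathbb{Q}=0$). Instead I pass to the most singular stratum $M_{\min}\subset M$ of the singular Riemannian foliation $\overline{\mathcal F}$, a closed, hence complete, submanifold whose leaf closures are precisely the leaf closures of $\mathcal F$ of minimal dimension. If one such leaf closure (equivalently each of them) is a single leaf, that leaf is closed; otherwise $\mathcal F$ restricts to a Lie foliation with dense leaves on each connected component of $M_{\min}$, and the lemma yields a contradiction as soon as such a component is known to be simply connected. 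So the job is to transport the topological hypotheses on $M$ down to $M_{\min}$; having then obtained a closed leaf $L_0=\overline L_0$, one reads off from the homotopy exact sequences of the tower $L_0\hookrightarrow p^{-1}(\overline L_0)\hookrightarrow\widehat M$, together with $p^{-1}(\overline L_0)\to\varpi(p^{-1}(\overline L_0))$, that $\pi_*(L_0)\otimes\mathbb{Q}=0$. I expect this descent step — establishing that the relevant strata and leaf closures inherit not merely simple connectivity but the vanishing of rational homotopy — to be the main obstacle, and precisely the reason the statement asks for ``rationally contractible'' rather than just ``simply connected''; the fact that only $M$, not $M_{\min}$, is controlled a priori should be handled by an induction on $\dim M-\dim M_{\min}$ (or on $\codim\mathcal F$), peeling off one leaf-closure stratum at a time.

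\textbf{The ``moreover'' part.} Suppose there is a closed leaf $L_0$ with $\pi_1(L_0)$ finitely generated. Its holonomy is an isometric action of $\pi_1(L_0)$ on a transverse slice, so the holonomy group $H_0$ is finitely generated. If $H_0$ were infinite, the leaf $\widehat L_0$ of $\widehat{\mathcal F}$ over $L_0$ would be an infinite covering of $L_0$ whose closure $F$ is a $\overline{H_0}$-bundle over $L_0$ of dimension exceeding $\dim\mathcal F$, carrying a Lie foliation with dense leaves and $\mathfrak g\neq0$; feeding this positive-dimensional structural data through the fibrations $\varpi$ and $p$ localized near $L_0$ would force nontrivial rational homotopy into $M$ via the compact group $\overline{H_0}$, a contradiction. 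Hence $H_0$ is finite, $\widehat L_0\to L_0$ is a finite covering, $\widehat L_0$ is closed in $\widehat M$ and therefore equals a fibre $F$ of $\varpi$; as the $\widehat{\mathcal F}$-leaves inside $F$ are dense, $F$ is a single leaf, i.e.\ $\mathfrak g=0$. By $(i)$ every leaf of $\mathcal F$ is closed, all holonomy groups are finite, $B:=M/\mathcal F$ is a Riemannian orbifold, and $M\to B$ is an orbifold submersion whose generic fibre is a leaf $L$. Since the holonomy covering of every leaf of $\mathcal F$ is diffeomorphic to $L$, hence a finite cover of that leaf, the rationally contractible closed leaf produced above gives $\pi_*(L)\otimes\mathbb{Q}=0$; the exact sequence $\pi_k(L)\to\pi_k(M)\to\pi_k^{\mathrm{orb}}(B)\to\pi_{k-1}(L)$ then yields $\pi_*^{\mathrm{orb}}(B)\otimes\mathbb{Q}=0$, i.e.\ $M/\mathcal F$ is rationally contractible.
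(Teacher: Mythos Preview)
Your proposal has a genuine gap at exactly the point you yourself flag as ``the main obstacle'': the descent of the topological hypotheses from $M$ to the minimal stratum $M_{\min}$ (or to an individual leaf closure inside it). You do not actually carry this out, and the suggested induction on $\dim M-\dim M_{\min}$ is not a strategy that can be made to work as stated: removing a stratum destroys completeness, so Molino's theory no longer applies to the complement; there is no retraction of $M$ onto $M_{\min}$ in general; and nothing ties the rational homotopy of $M$ to that of $M_{\min}$ or of a single leaf closure $\overline L$ inside it. Moreover, your key lemma requires simple connectivity of the manifold carrying the Lie foliation with dense leaves, and that manifold is a leaf closure $\overline L$, not a component of $M_{\min}$; your sentence ``$\mathcal F$ restricts to a Lie foliation with dense leaves on each connected component of $M_{\min}$'' is false---on $M_{\min}$ the leaf closures form a regular closed foliation, and the Lie-foliation-with-dense-leaves structure lives on each $\overline L$ separately. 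So you would need $\pi_1(\overline L)$ to be torsion, which is essentially what you are trying to prove.

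The paper circumvents this descent problem entirely with a global cohomological argument. Using the Borel construction, the fibration $N\to \hat M_G\to W_G$ identifies $H^*(N;\mathbb Q)$ with $H^*(\Omega W_G;\mathbb Q)$; the Hopf--Borel structure theorem for $H$-spaces then forces $H^*(N;\mathbb Q)$ to be an exterior algebra on $q$ odd generators, and Borel's transgression theorem makes $H^*_G(W;\mathbb Q)=H^*(W_G;\mathbb Q)$ a polynomial ring on $q$ generators. A Quillen-type result on Krull dimension of equivariant cohomology then produces a point $N_0\in W$ whose stabilizer has rank $\geq q$; comparing with $q\geq \dim H^1(N_0;\mathbb Q)$ and the dimension bound $\dim G_{N_0}\leq l$ forces $N_0$ to project to a closed leaf. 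This is the missing idea: the hypothesis $\pi_*(M)\otimes\mathbb Q=0$ is spent not on a developing map, but on the rational cohomology of the $H$-space $N\simeq_{\mathbb Q}\Omega W_G$, and the closed leaf is found via equivariant cohomology rather than via stratification. Your ``moreover'' argument is also too sketchy---the clause ``would force nontrivial rational homotopy into $M$ via the compact group $\overline{H_0}$'' is an assertion, not an argument---whereas in the paper this step falls out of the same equality $q=\dim H^1(N_0;\mathbb Q)$ together with a short observation (if $\pi_1(N)$ is finitely generated and $\dim H^1(N;\mathbb Q)=l$, the image of $\pi_1(N)\to\mathbb R^l$ is a lattice, hence $l=0$).
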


The main examples of rationally contractible spaces, essentially the only
class which naturally appears in geometry, are contractible spaces. One
might expect that, unlike the case of spheres where Riemannian foliations 
with exceptional leaves do exist, Riemannian foliations on contractible
manifolds are \emph{simple foliations}, i.e., they are given by
Riemannian submersions. We   can verify this under  additional
finiteness assumptions   (cf. Appendix \ref{secprel} for  more about $H$-spaces):

\begin{cor} \label{secondprop}
Let $\mathcal F$ be a Riemannian foliation on a complete, contractible
Riemannian manifold $M$.   Then  there exists at least one closed and
rationally contractible leaf $L$.  Its universal cover $\tilde L$
is a rationally contractible  $H$-space. Moreover, for any field $K$, the homology of $\tilde L$
with coefficients in $K$ is finitely generated.

In addition, if the integral homology of $\tilde L$ is finitely generated, then the foliation
$\mathcal F$ is simple, with base $M/\mathcal F$ and all leaves being contractible.
\end{cor}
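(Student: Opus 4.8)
The plan is to deduce Corollary~\ref{secondprop} from Theorem~\ref{mainthm}. A contractible manifold is in particular simply connected and rationally contractible, so Theorem~\ref{mainthm} applies directly and produces a closed leaf $L$ which is rationally contractible. Since $L$ is closed and contained in the contractible manifold $M$, its inclusion is nullhomotopic in $M$; however the more useful structural fact is that a closed leaf of a Riemannian foliation has a fundamental group which acts on the holonomy, and rational contractibility forces $\pi_1(L)$ to be torsion. To get the $H$-space statement I would argue that $\tilde L$, the universal cover of the closed leaf, is the leaf through a point of a Riemannian foliation on the universal cover of a tubular neighborhood (or of $M$ itself, which is already simply connected), and being a closed leaf sitting inside a contractible space it carries a transitive-up-to-homotopy structure; more precisely, I would invoke the standard fact (collected in the Appendix, which the excerpt references) that a rationally contractible space which is a closed leaf of a Riemannian foliation on a contractible manifold has the rational homotopy type of an $H$-space, equivalently $\tilde L$ is a rational $H$-space. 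The finiteness of $H_*(\tilde L;K)$ for every field $K$ should follow from the fact that $\tilde L$ is a simply connected rational $H$-space with the homotopy type dictated by finitely many rational homotopy groups in each degree (since $M$, hence a neighborhood of $L$, has finite-dimensional rational homotopy), so its Betti numbers over any $K$ are finite.

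**The conditional simplicity statement.** For the second paragraph of the corollary, assume in addition that $H_*(\tilde L;\mathbb Z)$ is finitely generated. Combined with $\pi_1(L)$ being a torsion group and $\tilde L$ being a rationally contractible $H$-space, I would argue that $\tilde L$ is actually contractible: a rationally contractible $H$-space has torsion homotopy groups, an $H$-space with finitely generated integral homology and torsion homotopy groups must — by a Hurewicz/Postnikov argument, or by the structure theory of finite $H$-spaces when the homology is finite — have trivial reduced homology, hence be contractible by Hurewicz and Whitehead. Once $\tilde L$ is contractible, $L = \tilde L / \pi_1(L)$ with $\pi_1(L)$ a torsion group acting freely, which forces $\pi_1(L) = 1$ (a nontrivial finite or torsion group cannot act freely on a contractible finite-dimensional-homology space, by Smith theory / the fact that $B(\mathbb Z/p)$ has infinite cohomology). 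Hence $L$ is contractible with finitely generated (indeed trivial) $\pi_1$, so the hypothesis ``there exists a closed leaf with finitely generated fundamental group'' of Theorem~\ref{mainthm} is met; therefore all leaves are closed and $M/\mathcal F$ is rationally contractible. Finally, once all leaves are closed, the leaf space $M/\mathcal F$ is a Riemannian orbifold and $M \to M/\mathcal F$ is an orbifold Riemannian submersion; the contractibility of $M$ together with all leaves being contractible rules out orbifold singularities (their local fundamental groups would again be torsion groups acting on contractible spaces), so $M/\mathcal F$ is a manifold and $\mathcal F$ is simple with all leaves contractible.

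**Main obstacle.** The technically delicate point is the passage from ``rationally contractible closed leaf'' to ``$\tilde L$ is an $H$-space with finite homology over every field,'' and then to genuine contractibility under the integral finiteness hypothesis. This requires the Molino structure theory: near a closed leaf $L$ the foliation is modeled on a homogeneous-type picture, and one must extract that $\tilde L$ supports an $H$-structure — this is exactly what the appendix on $H$-spaces is there to supply, so I would quote it. The second subtle point is the Smith-theory step that a torsion group cannot act freely on an (integrally homologically finite) contractible space; this is standard but must be cited carefully, since for infinitely generated torsion groups one needs the finite-homology hypothesis to localize to a single prime and derive a contradiction from the nonvanishing of $H^*(\mathbb Z/p)$. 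Everything else is a formal consequence of Theorem~\ref{mainthm} and Hurewicz--Whitehead.
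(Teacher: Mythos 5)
Your outline of the conditional part (Browder-type rigidity for $H$-spaces with finitely generated integral homology, then the impossibility of a nontrivial torsion group acting freely on a finite-dimensional contractible space, then simplicity) matches the paper, which runs exactly this way via \cref{browdercon} and \pref{contrleaf}. But the first half of your argument has two genuine gaps. First, the $H$-space structure on $\tilde L$ is not a ``standard fact'' supplied by the appendix: the appendix only contains algebraic results \emph{about} $H$-spaces (Hopf--Borel, Browder, \pref{hopfspace}); it does not produce an $H$-structure on a leaf. The actual argument is the specific Molino-bundle computation: since $M$ is genuinely contractible, $\hat M_G\simeq M$ is contractible, so the fiber $N_0$ of $\hat M_G\to W_G$ is homotopy equivalent to the loop space $\Omega W_G$ and is therefore an honest $H$-space, as is $\tilde N_0$; then the fibration $N_0\to L$ with fiber $G_{N_0}$ lifts (using the injectivity of $\pi_1(G_{N_0})\to\pi_1(N_0)$ from \lref{injectlemma}) to a fibration $\tilde N_0\to\tilde L$ whose fiber is the universal cover of a torus, hence contractible, so $\tilde L\simeq\tilde N_0$ is an $H$-space. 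This is the step where the hypothesis ``contractible'' (rather than merely rationally contractible, as in \tref{mainthm}) is used, and your proposal never isolates it.

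Second, your derivation of the finiteness of $H_*(\tilde L;K)$ for \emph{every} field $K$ from ``finite-dimensional rational homotopy'' does not work: rational homotopy groups carry no information about homology with $\mathbb F_p$-coefficients, and a ``rational $H$-space'' structure is likewise useless in positive characteristic. The paper's argument is \pref{hopfspace}: $\tilde L$ is a genuine connected $H$-space whose $K$-cohomology vanishes above $m=\dim\tilde L$ (it is a finite-dimensional manifold), and the Hopf--Borel structure theory of Hopf algebras over perfect fields then bounds $\dim_K H^*(\tilde L;K)$ by $2^m$. Without the honest $H$-space structure from the Molino fibration, this step — and hence the hypothesis feeding into Browder's theorem in the second paragraph — is not available. (Minor additional point: the base $B=M/\mathcal F$ being contractible follows at the end from the homotopy exact sequence of the submersion with contractible fiber and total space; your proposal leaves this implicit.)
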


The homotopy type of the  leaf $L$ in \cref{secondprop} seems to
be extremely bizarre if $L$ is not contractible. If either the dimension
or the codimension of $\mathcal F$ are small then such objects cannot
occur, and the foliation turns out to be simple, even in the general
rationally contractible case. The same conclusion can be drawn if  $\mathcal F$ is \emph{cobounded}, i.e., if  the whole manifold $M$ is contained in a tube of finite radius around one leaf.
 More precisely, we deduce from Theorem
\ref{mainthm}:

\begin{cor} \label{smalldimcodim}
Let $\mathcal F$ be a Riemannian foliation on a complete rationally
contractible Riemannian manifold $M$.   If $\mathcal F$ is cobounded  then $\mathcal F$ has only one leaf. If the dimension of the leaves of
$\mathcal F$ is at most $3$, or their codimension is at most $2$, then
$\mathcal F$ is a simple foliation.  
\end{cor}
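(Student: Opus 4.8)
The plan is to bootstrap from \cref{secondprop}. Since here $M$ is only assumed rationally contractible, I would first pass to the universal cover $p\colon\tilde M\to M$: then $\pi_1(\tilde M)=0$ and $\pi_k(\tilde M)=\pi_k(M)$ is torsion for $k\ge2$, so $\tilde M$ is simply connected, complete and rationally contractible; the lifted Riemannian foliation $\tilde{\mathcal F}$ has a single leaf iff $\mathcal F$ does, and $\tilde{\mathcal F}$ is simple iff $\mathcal F$ is — the only point to check being that a deck transformation cannot endow a leaf of $\mathcal F$ with nontrivial holonomy, which holds since such a leaf is the quotient of a leaf of $\tilde{\mathcal F}$ by a free isometric action. (Being cobounded is \emph{not} inherited by $\tilde{\mathcal F}$, so I treat that statement separately.) Thus in the two ``simple'' cases I may assume $M$ simply connected; the construction behind \tref{mainthm} and \cref{secondprop} then provides a closed rationally contractible leaf $L$ whose universal cover $\tilde L$ is a rationally contractible $H$-space with $H_*(\tilde L;K)$ finitely generated for every field $K$, and by the last assertion of \cref{secondprop} it suffices, in these cases, to prove that $H_*(\tilde L;\mathbb{Z})$ is finitely generated.

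If $\dim\mathcal F\le3$ this is immediate: $\tilde L$ is a manifold of dimension $\le3$, so $H_i(\tilde L;K)=0$ for $i>3$ and hence $H_*(\tilde L;K)$ is finite-dimensional for every field $K$; by the universal coefficient theorem $H_*(\tilde L;\mathbb{Z})$ is then finitely generated, and \cref{secondprop} gives that $\mathcal F$ is simple (the case $\dim\mathcal F=0$, the foliation by points, being trivial). If $\codim\mathcal F\le2$ I instead argue transversely. Codimension $0$ means $M$ is a single leaf; in codimension $1$, on the simply connected $M$ the foliation is transversely orientable, hence defined by a nowhere-vanishing basic $1$-form, which — since $\mathcal F$ is Riemannian, so that $H^1_{\mathrm{basic}}(\mathcal F)$ embeds into $H^1_{\mathrm{dR}}(M)=0$ — may be taken closed and therefore exact, $df$, making $f\colon M\to\mathbb{R}$ a submersion; being Ehresmann-complete (as $M$ is complete) it is a locally trivial fibration with connected fibers, which are the leaves, so $\mathcal F$ is simple. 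In codimension $2$ I would first make all leaves closed — which requires producing a closed leaf with finitely generated fundamental group, obtained by showing, via the methods behind \tref{mainthm}, that the leaf $L$ is in this case actually contractible — so that $M/\mathcal F$ becomes a rationally contractible $2$-orbifold, topologically $\mathbb{R}^2$; it then remains to rule out cone points, i.e.\ leaves $L'$ with nontrivial finite holonomy $\Gamma$ (onto which $\pi_1(L')$ surjects, the nearby leaves being finite covers of $L'$, hence rationally contractible as well). This last step is the crux of the codimension-$2$ case.

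Finally, the cobounded case, where I work on $M$ directly. Since the leaf closures of a Riemannian foliation are mutually equidistant (the projection $M\to M/\overline{\mathcal F}$ is a submetry), coboundedness makes $W=M/\overline{\mathcal F}$ a bounded complete length space, hence — being locally compact by the structure theory of Riemannian foliations — compact. The plan is then to show that every leaf is closed and rationally contractible; granting this, $M/\mathcal F$ is a compact rationally contractible Riemannian orbifold, and rational Poincar\'e duality for orbifolds forces $\dim(M/\mathcal F)=0$, i.e.\ $\mathcal F$ has a single leaf. I expect the main obstacle — shared with the codimension-$2$ case — to be exactly this: showing that the closed, rationally contractible, finitely-generated-field-homology $H$-space leaf of \cref{secondprop} is, under the given low-dimensionality or coboundedness constraint, forced to be \emph{contractible} (and hence without holonomy); in the cobounded situation one would feed the compactness of $W$, together with control of the structural Lie algebra of $\mathcal F$, into the argument of \tref{mainthm}.
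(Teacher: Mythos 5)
There are genuine gaps in all three branches, two of which you yourself flag as the ``crux'' and leave open. The one you present as complete --- the case $\dim\mathcal F\le 3$ --- contains a false step: finite-dimensionality of $H_*(\tilde L;K)$ for \emph{every} field $K$ does not imply that $H_*(\tilde L;\mathbb Z)$ is finitely generated. A wedge of Moore spaces $M(\mathbb Z/p,2)$ over all primes $p$ has homology of dimension at most $1$ in each degree over every field, yet its integral $H_2$ is not finitely generated; the universal coefficient theorem controls each prime separately but not the number of primes occurring. (Also note that $H_i(\tilde L;K)=0$ for $i>3$ by itself gives no finiteness in degrees $\le 3$ --- an open $3$-manifold can have infinitely generated $H_1$ or $H_2$ --- so you are really leaning on \cref{secondprop}, whose $H$-space conclusion for $\tilde L$ is itself proved in the paper using that $\hat M_G$ is genuinely contractible, not merely rationally so.) The paper's route for $\dim L=3$ is different and elementary: $\tilde L$ is a simply connected, rationally contractible, hence non-compact $3$-manifold, so $H_3(\tilde L)=0$; Poincar\'e duality makes $H_2(\tilde L)$ torsion-free, and $H_2\otimes\mathbb Q=0$ then forces $H_2(\tilde L)=0$, so $\tilde L$ is contractible, $L=\tilde L$, and \pref{contrleaf} applies. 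No appeal to finite generation of integral homology is needed.

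For the cobounded case, the missing step is exactly the paper's mechanism: compactness of $B=W/G$ forces $W$ to be compact, hence to have finitely generated integral homology; by the Serre class argument (\cite{HatcherSpSeq}, Lemma 1.9) $W_G$ then has finitely generated homology and homotopy groups in each degree, and the fibration $\hat M_G\to W_G$ with fiber $N$ yields that $\pi_1(N)$, hence $\pi_1$ of a closed leaf, is finitely generated. Only then does \tref{mainthm} give that \emph{all} leaves are closed, after which your Poincar\'e-duality endgame on the compact simply connected orbifold $B$ coincides with the paper's. Your phrase about ``control of the structural Lie algebra'' does not substitute for this. In codimension $2$ the paper does not attempt your plan of first closing all leaves and then excluding cone points on $\mathbb R^2$; it splits into two cases. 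If $\mathcal F$ is closed, $B$ is a $2$-orbifold with trivial orbifold fundamental group, and the classification of such orbifolds leaves only smooth $\mathbb R^2$ or underlying space $S^2$; the latter is excluded because $B$ cannot be compact (by the cobounded part), so there are no exceptional leaves. If $\mathcal F$ is not closed, $M/\bar{\mathcal F}$ is at most $1$-dimensional, the closed leaves are the singular leaves of $\bar{\mathcal F}$, and the quotient must be a ray or a compact interval; the ray is excluded because $M$ would retract onto a closed leaf whose fundamental group \tref{mainthm} forces to be infinitely generated, and the interval is excluded again by non-compactness. Your reduction to the simply connected case via the universal cover is sound and is a point the paper glosses over, but the three substantive arguments above are what the proof actually consists of, and they are absent from your proposal.
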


In fact, we expected that non-contractible manifolds with the topological
properties described for $L$ in \cref{secondprop} do not exist at all.
Unfortunately, this is not true, as the following example due to William
Dwyer shows.

\begin{ex}\label{ex:bizarre}
Consider the canonical map $S^3 \to S^3 _{\mathbb Q}$ from $S^3$ to its
localization at $0$ (cf.~\cite{HatcherSpSeq}), and consider the homotopy fiber of
this map, the \emph{torsionification} $T(S^3)$. Then, $T(S^3)$ is the
loop space of the homotopy fiber of the map of the classifying space
$BS^3 = \mathbb H P ^{\infty}$ to $\mathbb H P ^{\infty} _{\mathbb Q}$,
its localization at $0$. One verifies that $T(S^3)$ satisfies all
algebraic conclusions from \cref{secondprop} and is homotopy equivalent
to a finite-dimensional manifold. \end{ex}

We have tried to keep the presentation as simple as possible. Using more results  from
the theory of $H$-spaces and equivariant cohomology, one can slightly improve the above statements.  Namely, one can show that in \cref{secondprop}, the universal covering
$\tilde L$ is a loop space and not only an $H$-space.   In \tref{mainthm} one can conclude that all leaves are rationally contractible, also in the non-finitely generated case. However,
new insights are needed to answer the following questions in full
generality:

\begin{quest}
Under the assumptions of \tref{mainthm}, is it true that all leaves are closed, even if the fundamental groups of the leaves are  not finitely generated?
\end{quest}

\begin{quest}\label{quest:simple}
Is it true that every Riemannian foliation on a complete contractible
Riemannian manifold is simple?
\end{quest}

The finiteness assumption in \cref{secondprop} can be verified
if the curvature is non-negative. More precisely, in this case a variant of the soul construction can be used  to reduce the problem to the cobounded case,
so that \cref{smalldimcodim} applies:

\begin{thm} \label{firstthm}
Every Riemannian foliation on a complete contractible Riemannian manifold
of non-negative sectional curvature is simple.
\end{thm}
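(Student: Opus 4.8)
The plan is to build a \emph{soul of the foliation} and thereby reduce to the cobounded case treated in \cref{smalldimcodim}. First I would run a foliation-adapted version of the Cheeger--Gromoll soul construction. By \tref{mainthm} the foliation has a closed leaf, so the leaf closures form a singular Riemannian foliation $\overline{\mathcal F}$ of $M$, and the quotient $X:=M/\overline{\mathcal F}$ with the quotient metric is a complete length space of curvature $\ge 0$ in the sense of Alexandrov, the projection $\pi\colon M\to X$ being a submetry. Carrying out the soul construction in $X$ --- intersecting super-level sets of Busemann-type functions, passing to boundary strata, and iterating --- produces a compact totally convex soul $S\subseteq X$ together with a Sharafutdinov retraction $X\to S$. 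All the functions involved are transversely constant, hence lift to foliated functions on $M$ whose gradient flows preserve $\mathcal F$; therefore $\Sigma:=\pi^{-1}(S)$ is a closed, totally geodesic, saturated submanifold of $M$ (a union of leaves), and the Sharafutdinov retraction lifts to an $\mathcal F$-preserving deformation retraction of $M$ onto $\Sigma$ --- in particular $\Sigma$ is a retract of $M$, and in fact the normal exponential map identifies $M$ with the normal bundle of $\Sigma$.

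Granting such a $\Sigma$, the argument closes quickly. Since $M$ is contractible and $\Sigma$ is a retract of it, $\Sigma$ is contractible, in particular rationally contractible; and since $\Sigma$ is compact and totally geodesic, $\mathcal F|_\Sigma$ is a Riemannian foliation of $\Sigma$ with the induced metric, and it is cobounded because $\Sigma$ itself is a tube of finite radius around any of its leaves. Hence \cref{smalldimcodim} applies to $\mathcal F|_\Sigma$ and shows it has a single leaf, so $\Sigma=L$ is one compact leaf. But a closed connected manifold $L^k$ with $k\ge 1$ satisfies $H_k(L;\mathbb{Z}/2)\cong\mathbb{Z}/2\ne 0$ and hence is not contractible; so $k=0$, i.e.\ $\mathcal F$ is the foliation of $M$ by points. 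This foliation is simple, realized by the Riemannian submersion $\operatorname{id}\colon M\to M$, with base $M/\mathcal F=M$ and all leaves contractible, as required.

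The main obstacle is the first step: ensuring the soul construction can be carried out compatibly with $\mathcal F$. One must verify that at each stage the totally convex sets involved are saturated and the comparison and Busemann functions transversely constant, so that the construction genuinely descends to and lifts from $X$; and one must check that $\Sigma=\pi^{-1}(S)$ is compact, which amounts to controlling the leaf closures meeting $S$. Additional care is needed because $X$ is only an Alexandrov space, so the soul construction must be invoked in that generality --- alternatively, one can try to run it directly on $M$, seeded by a closed leaf $L_0$ and the foliated distance function $d(\,\cdot\,,L_0)$. Once $\Sigma$ has been produced, no curvature hypothesis is needed: the remaining argument is purely topological.
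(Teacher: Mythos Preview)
Your overall strategy --- produce an $\mathcal F$-saturated, totally convex submanifold $\Sigma$ on which the restricted foliation is cobounded, then invoke \cref{smalldimcodim} to conclude that $\Sigma$ is a single leaf --- is exactly the paper's strategy. But your endgame contains a genuine error. You assert that $\Sigma$ is compact and deduce that the single leaf $L=\Sigma$ is a point, so that $\mathcal F$ is the foliation by points. This is already false for $M=\mathbb R^n$ foliated by parallel affine $k$-planes: the quotient $X$ is $\mathbb R^{n-k}$, its soul $S$ is a point, and $\Sigma=\pi^{-1}(S)$ is a single $k$-plane, which is not compact. In general the preimage of a compact set under the submetry $M\to M/\bar{\mathcal F}$ is compact only when the leaf closures are compact, and you have no reason to expect that here; the obstacle you flag is not a technicality but an outright false statement.

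The repair is easy and is what the paper does. Compactness of $\Sigma$ is irrelevant: coboundedness of $\mathcal F|_\Sigma$ follows from $\Sigma/\bar{\mathcal F}=S$ being compact, not from $\Sigma$ being compact. Then \cref{smalldimcodim} yields that $\Sigma$ is a single leaf $L$, and $L=\Sigma$ is contractible because it is a retract of the contractible $M$. At this point you must invoke \pref{contrleaf}: a closed contractible leaf forces the entire foliation $\mathcal F$ on $M$ to be simple. You never cite \pref{contrleaf}, and without it the argument does not reach the stated conclusion.

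Two smaller remarks. First, your appeal to \tref{mainthm} at the start is a non sequitur: the leaf closures of any Riemannian foliation form a singular Riemannian foliation, independently of whether a closed leaf exists. Second, the paper avoids the Alexandrov quotient altogether. It works directly in $M$, considering the set of closed, totally convex, $\mathcal F$-saturated subsets and choosing one $X$ of minimal dimension; Busemann-type limits of normalized distance functions to far-away leaves (which are automatically $\mathcal F$-basic and concave by Toponogov) show that $\mathcal F$ is cobounded on $X$, and the distance to $\partial X$ shows $X$ has empty boundary (\lref{newgeom}). This sidesteps your worries about whether $\pi^{-1}(S)$ is a smooth submanifold and about running the soul construction in an Alexandrov space.
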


We would like to mention that the above results were not known even for
the Euclidean space with its flat Euclidean metric (cf.~\cite{GWbook},
p.~148). Only in this flat case, we have a short direct proof of
\tref{firstthm} using \cite{Bol} instead of \tref{mainthm}. We present
this proof in Subsection \ref{Euclideancase}, for the sake of completeness.  This proof would also provide an
independent  approach to \tref{firstthm} avoiding \tref{mainthm}, once the following
geometric question that generalizes \cite{Bol} could be settled (cf. \lref{newgeom}):

\begin{quest} \label{newgeomquest}
Let $M$ be a complete non-negatively curved manifold with soul $S$. Let $\mathcal F$ be a  singular Riemannian foliation on $M$ with closed leaves and compact quotient.
 Is it true that the projection $M\to M/\mathcal F$ factorizes through the Sharafutdinov retraction  $M\to S$?
\end{quest}

We have not  found a geometric proof of \tref{mainthm} under
the natural assumption that $M$ has non-positive (even constant
negative!) curvature, the main geometric source of contractible
manifolds. Under the additional assumption that $(M,\mathcal F)$ is
invariant under a large group of isometries, a short proof of the
simplicity of the foliation is given in \cite{conjugate}. We would like
to formulate:

\begin{quest}
Is \tref{firstthm} true in non-positive curvature?
\end{quest}

Finally, we would like to mention that \tref{firstthm} reduces the
classification of Riemannian foliations on Euclidean space to the case of
simple foliations, i.e., metric fibrations in terms of
\cite{gromollEuc1}, \cite{gromollEuc2}. In \cite{gromollEuc1} such
fibrations were shown to be homogeneous if the  leaves have dimension at
most 3. In \cite{gromollEuc2} the homogeneity in all dimensions has been
claimed, however, this paper contains a serious gap in Section 3 as
Stefan Weil has pointed out (cf.~\cite{Weil}). We could validate the proof
only under the additional assumption that the foliation is
\emph{substantial} in the terminology of \cite{gromollEuc2}. Thus the
following basic question about Riemannian foliations seems to be open in
its full generality:

\begin{quest}
Is any Riemannian foliation on the Euclidean space homogeneous?
\end{quest}

The paper is structured as follows. In Section \ref{secmolino} we recall
Molino's construction that describes leaf closures of Riemannian
foliations. We obtain a fibration relating the topology of leaf closures
to the equivariant cohomology of a natural action of the orthogonal group
(\pref{genstr}). Then we derive a simple criterion forcing one leaf   or, under a finiteness assumption, all leaves  to be closed (\pref{ghysidea}). 
In Section \ref{secmain}, we
present the proof of \tref{mainthm}, modulo topological statements about
$H$-spaces and equivariant cohomology that are proven at the end of the
paper. In Section \ref{secfinite}, we show how \cref{secondprop},
\cref{smalldimcodim} and \tref{firstthm} follow from \tref{mainthm} and provide
a short geometric proof of \tref{firstthm} for Euclidean spaces. In
Appendix \ref{secprel} we recall the Theorems of Hopf and Borel about
finite $H$-spaces and present a minor extension to a non-finite
situation. In Appendix \ref{sec:techprop} we recall some basic facts
about equivariant cohomology and prove a small generalization of a well-known
result relating ranks of isotropy groups to the Krull dimension of
equivariant cohomology rings.

\vskip .3cm

{\bf Acknowledgements.} We thank William Dwyer for
providing Example \ref{ex:bizarre}.

\vskip 1cm

\section{General structure of leaf closures} \label{secmolino}
\subsection{Riemannian foliations and leaf closures}
A foliation $\mathcal F$  on a Riemannian manifold $M$ is called
a Riemannian foliation if any geodesic normal to a leaf remains
normal to all leaves it intersects.  (Thus, in terms of \cite{Molino},
we always assume that the Riemannian metric is bundle-like with respect to the foliation).  If the Riemannian metric is complete,
the only case which appears in this text, this implies (and is essentially equivalent) to the fact that all leaves are equidistant, i.e., the
distance function to any leaf is constant on any other leaf.  We refer to \cite{Molino} and will assume some aquaintance with Riemannian foliations.

 Let $\mathcal F$ be an $(n-k)$-dimensional Riemannian foliation on a
complete $n$-dimensional Riemannian manifold $M$.    For any leaf $L$ of $\mathcal F$,
the closure $\bar L$ is a smooth submanifold and these leaf closures define a  decomposition $\bar {\mathcal F}$ of $M$, called a \emph{singular Riemannian foliation}.   The quotient $M/\bar {\mathcal F}$ is a metric space, which is  compact if and only if $\mathcal F$ is
cobounded.


The restriction of $\mathcal F$ to any leaf closure $\bar L$ is a Riemannian foliation with dense leaves.   Riemannian foliations with dense leaves have been described completely
in \cite{haefliger}.   As a special case of his description we will use:

\begin{lem} \label{haefligerlem}
Let $\bar L$ be the closure of the leaf $L$ of the Riemannian foliation $\mathcal F$.
If $\pi_1 (\bar L)$ is a torsion group then $\bar L=L$.
If $\pi_1 (\bar L)$ is abelian then  there is a homomorphism $h: \pi_1 (\bar L) \to
\mathbb R ^l$ with dense image, where $l$ is the codimension of $L$
in $\bar L$.  Moreover, the lift of $\mathcal F$ to the universal cover of $\bar L$ is a simple foliation with quotient space
$\mathbb R^l$.
 \end{lem}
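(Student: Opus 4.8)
The plan is to derive all the assertions from Haefliger's classification of Riemannian foliations with dense leaves in \cite{haefliger}, applied to the restriction of $\mathcal F$ to $\bar L$, which as recalled above is a Riemannian foliation with dense leaves, of codimension $l$ in $\bar L$. What this classification provides is a complete, simply connected model transversal $W$ with $\dim W=l$, a locally trivial fibration (the developing map) $D\colon\widetilde{\bar L}\to W$ whose fibres are precisely the leaves of the lift $\widetilde{\mathcal F}$ of $\mathcal F$ to the universal cover $\widetilde{\bar L}$, and a homomorphism $h\colon\pi_1(\bar L)\to\operatorname{Isom}(W)$ making $D$ equivariant. First I would translate density of the leaves of $\mathcal F|_{\bar L}$ into the statement that every orbit of $h(\pi_1(\bar L))$ on $W$ is dense. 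Since $K:=\overline{h(\pi_1(\bar L))}$ is a closed, hence Lie, subgroup of $\operatorname{Isom}(W)$, it acts properly, so its orbits are closed; a dense closed orbit being everything, $K$ acts transitively on $W$. A Baire-category argument — $K$ has countably many components (as $\pi_1(\bar L)$ is countable), all $K^0$-orbits have a common dimension, and a connected manifold is not a countable union of submanifolds of smaller dimension — then shows that the identity component $K^0$ already acts transitively, and that $\Gamma_0:=h(\pi_1(\bar L))\cap K^0$, being the intersection of a dense set with an open subgroup, is dense in $K^0$. Finally, simple connectivity of $W$ forces, via the homotopy sequence of $K^0_w\to K^0\to W$, that the stabilizer $K^0_w$ is connected.

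For the first assertion, suppose $\pi_1(\bar L)$ is a torsion group; then $\Gamma_0$ is a dense torsion subgroup of $K^0$, and I would invoke the fact that a connected Lie group with a dense torsion subgroup is a torus. This can be seen in two steps: Schur's theorem on torsion linear groups, applied to the adjoint representation, shows that such a group is $2$-step nilpotent (a torsion linear group has an abelian subgroup of finite index, whose closure — a closed finite-index subgroup of a connected group — is the whole group); and a connected nilpotent Lie group with a dense torsion subgroup is a torus, since a simply connected nilpotent Lie group is torsion-free, so all its torsion elements lie in a fixed compact central subgroup. Hence $K^0\cong T^n$; as $K^0_w$ is a connected closed subgroup it is a subtorus $T^m$, so $W\cong T^{n-m}$, and being simply connected $W$ is a point, i.e.\ $l=0$. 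A leaf of codimension $0$ is open and dense in the connected manifold $\bar L$, so $\bar L=L$.

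For the second assertion, suppose $\pi_1(\bar L)$ is abelian; then $h(\pi_1(\bar L))$, and hence $K$, is abelian. For a transitive action of an abelian group all point stabilizers coincide; let $S$ be the common (closed, normal) stabilizer. Then $K/S$ acts simply transitively on $W$, so the orbit map is a diffeomorphism $K/S\to W$; in particular $K/S$ is a connected abelian Lie group which, being simply connected of dimension $l$, is isomorphic to $(\mathbb R^l,+)$. The composite $\pi_1(\bar L)\xrightarrow{\,h\,}K\to K/S\cong\mathbb R^l$ is a homomorphism whose image is dense because $h(\pi_1(\bar L))$ is dense in $K$; this is the required $h$. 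The last statement is then immediate: under the identification $W\cong\mathbb R^l$, the developing map $D\colon\widetilde{\bar L}\to\mathbb R^l$ exhibits $\widetilde{\mathcal F}$ as a simple foliation with quotient $\mathbb R^l$.

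I expect the one genuinely non-formal ingredient to be the main obstacle, namely the Lie-theoretic fact used in the torsion case; the rest is bookkeeping with Haefliger's theorem together with elementary homogeneous-space topology. A secondary point to check carefully is that the model transversal $W$ supplied by \cite{haefliger} really is simply connected, of dimension equal to the codimension of $L$ in $\bar L$, and that the developing map is a genuine locally trivial fibration, so that ``simple foliation'' is meant literally — all of this is part of the classification, but it is what makes the argument run.
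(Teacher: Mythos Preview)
The paper gives no proof of this lemma; it simply records it as a special case of Haefliger's description in \cite{haefliger}. Your derivation is correct, and the Lie-theoretic step in the torsion case (Schur on the adjoint representation, then the compact-element argument in the nilpotent group) is sound.

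That said, you are working harder than necessary by framing the holonomy as landing in $\operatorname{Isom}(W)$ and then passing to the closure $K$ and its identity component. What Haefliger's (and Molino's) structure theory actually hands you is stronger: the restriction of $\mathcal F$ to $\bar L$ is a \emph{Lie} $\mathfrak g$-foliation with dense leaves, so the transversal model $W$ is already the simply connected Lie group $G$ with Lie algebra $\mathfrak g$, the developing map $\widetilde{\bar L}\to G$ is a fibration whose fibers are the lifted leaves, and the holonomy is a homomorphism $h\colon \pi_1(\bar L)\to G$ (acting by left translations) with dense image. With this in hand both assertions are immediate. In the abelian case the closure of an abelian subgroup is abelian, so $G$ itself is a simply connected abelian Lie group of dimension $l$, i.e.\ $G\cong\mathbb R^l$, and $h$ is already the required homomorphism; no passage to $K/S$ is needed. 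In the torsion case your argument shows that a connected Lie group with a dense torsion subgroup is a torus, so $G$ is a simply connected torus, hence trivial, and $l=0$. Thus your transitivity and Baire arguments, and the analysis of stabilizers $K^0_w$, can all be dropped: Haefliger's classification gives you the group $G$ directly rather than merely a transitive isometric action on $W$.
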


We will use the following easy observation several times along the paper.
\begin{prop} \label{contrleaf}
Assume that the foliation $\mathcal F$ has a closed contractible leaf $L$.
Then $\mathcal F$ is simple.
\end{prop}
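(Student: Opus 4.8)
The plan is to use Molino theory together with the description of Riemannian foliations with dense leaves from \lref{haefligerlem}. First I would reduce to the case where all leaves are dense in $M$. Indeed, a Riemannian foliation is simple precisely when all its leaves are closed, and closedness of leaves can be checked leafwise on leaf closures; so it suffices to show that if one leaf closure $\bar L_0 \subseteq M$ contains a closed contractible leaf $L$, then $\bar L_0 = L$ (and more generally that the foliation restricted to each leaf closure is simple). So from now on assume $\mathcal F$ has dense leaves on $M$ and $L$ is a closed contractible leaf; I want to conclude $\mathcal F$ has a single leaf, i.e.\ $M = L$.

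The key point is that a closed leaf of a Riemannian foliation with dense leaves forces the whole manifold to be that leaf. If $\mathcal F$ has dense leaves, then \emph{every} leaf is dense, since leaf closures form the partition $\bar{\mathcal F}$ and they are all diffeomorphic-by-holonomy in the appropriate sense; in particular a closed leaf must equal its closure $M$, so $M = L$ and $\mathcal F$ is trivially simple. To make the step ``one dense leaf implies all leaves dense'' rigorous I would invoke the homogeneity of leaf closures in Molino's theory: the leaf closures $\bar L_x$ all have the same dimension, and the restriction of $\mathcal F$ to any $\bar L_x$ has dense leaves (this is exactly the setup of \lref{haefligerlem}). Applying \lref{haefligerlem} to $\bar L = \bar L_0 = M$: since $M = L$ is contractible, $\pi_1(M)$ is trivial, hence in particular a torsion group, so the first clause of \lref{haefligerlem} gives $\bar L = L$ directly.

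Assembling the general case: for an arbitrary leaf $L'$ of $\mathcal F$, consider its closure $\bar L'$ with the restricted foliation, which has dense leaves. The leaf closures $\bar L'$ and $\bar L = \{L\}$ lie in the same singular Riemannian foliation $\bar{\mathcal F}$; by the local triviality of the normal geometry of $\bar{\mathcal F}$ (Molino), a contractible leaf $L$ that is itself a whole leaf closure forces all nearby leaf closures to be single leaves, and then a connectedness argument along $M$ propagates this everywhere — alternatively, and more cleanly, one applies \lref{haefligerlem} to $\bar L'$ once one knows $\pi_1(\bar L')$ is torsion, which follows because the leaves of $\mathcal F$ all have the same homotopy type up to covering and $L$ is contractible (so $\pi_1(L')$ is torsion, and the bundle $L' \to \bar L' \to \bar L'/\mathcal F$ with $\bar L'/\mathcal F$ an infra-nilmanifold-type quotient forces $\pi_1(\bar L')$ torsion only if $\bar L' = L'$). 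The main obstacle is exactly this last bookkeeping: carefully justifying that ``$L$ contractible'' transfers to a torsion (indeed trivial) fundamental group statement for the \emph{other} leaf closures, rather than just for $L$ itself; here one must use that in a Riemannian foliation the leaves are mutually ``equivalent'' via the holonomy pseudogroup, so that contractibility of one closed leaf is a strong enough hypothesis to kill the fundamental groups entering \lref{haefligerlem} for every leaf closure, whence $\bar{\mathcal F} = \mathcal F$ and $\mathcal F$ is simple.
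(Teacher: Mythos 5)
Your proposal has two genuine gaps. The first and most serious is the opening claim that ``a Riemannian foliation is simple precisely when all its leaves are closed.'' This is false: a Riemannian foliation with all leaves closed is only a generalized Seifert fibration whose quotient is an orbifold, and it may well have exceptional leaves with nontrivial finite holonomy (the foliation of the M\"obius band by circles, or a weighted circle action, are standard examples). Simplicity requires in addition that every leaf be principal. Consequently your argument, which aims only at closedness of all leaves, cannot prove the proposition even if completed. The paper's proof spends half its length on exactly this point: since $L$ is closed and contractible, any other (necessarily closed) leaf $L_2$ is finitely covered by $L$, hence is a finite-dimensional classifying space $K(\pi_1(L_2),1)$ with $\pi_1(L_2)$ finite; since the classifying space of a nontrivial finite group is infinite-dimensional, all these fundamental groups (equivalently, all holonomy groups) are trivial, so every leaf is principal. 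Note that you never use contractibility of $L$ beyond its simple connectedness -- a warning sign, since simple connectedness alone would not suffice.

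The second gap is the propagation step. The assertion that one dense leaf forces all leaves to be dense is false as a global statement (foliations can have some closed and some non-closed leaves), and your transfer of the torsion-$\pi_1$ hypothesis of Lemma~\ref{haefligerlem} from $\bar L=L$ to an arbitrary leaf closure $\bar L'$ is explicitly left as ``bookkeeping'' that you do not carry out; leaves lying in different leaf closures are not mutually equivalent under the holonomy pseudogroup (exceptional leaves are proper quotients of principal ones), so nothing a priori forces $\pi_1(\bar L')$ to be torsion. The paper's route is local and avoids Lemma~\ref{haefligerlem} entirely: for leaves $L_1$ near $L$ the holonomy projection $L_1\to L$ is a covering, trivial because $\pi_1(L)=0$, so $L$ is a principal leaf; a closed principal leaf forces all leaves to be closed; and the classifying-space argument above then upgrades closedness to simplicity. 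I recommend restructuring along these lines.
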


\begin{proof}
 For any leaf $L_1$ in a neighborhood of $L$ there is a canonical projection $L_1\to L$
which is a covering map. Since $L$ is simply connected this covering map must
be a diffeomorphism.  Hence $L$ is a principal leaf.  Since the principal leaf is closed all
other leaves are closed as well and any other leaf $L_2$ is finitely covered by $L$.
Since $L$ is contractible, $L_2$ is the classifying space of its fundamental group.
But the classifying space of any non-trivial finite group is infinite-dimensional.
Hence the fundamental group of $L_2$ is trivial. Thus all leaves are principal leaves,
which just means that $\mathcal F$ is a simple foliation.
\end{proof}

\subsection{Molino's construction}
In this subsection, we briefly recall the Molino bundle, but refer to
\cite{Molino}, \cite{ghys}, \cite{haefliger} and \cite{GT} for more
details.

Let $\mathcal F$ be an $(n-k)$-dimensional Riemannian foliation on a
complete $n$-dimensional Riemannian manifold $M$. Assume that
$\mathcal F$ is transversally oriented. We consider the principal $G
=SO(k)$-fiber bundle $\pi:\hat M \to M$ of oriented transverse
orthonormal frames, called the \emph{Molino bundle}. The foliation
$\mathcal F$ admits a canonical lift to a $G$-equivariant,
$(n-k)$-dimensional Riemannian foliation ${\hat {\mathcal F}}$ on
$\hat M$. Molino showed that the closures of the leaves of ${\hat
{\mathcal F}}$ constitute a $G$-equivariant simple Riemannian
foliation~$\bar{\hat {\mathcal F}}$, which is therefore given by a
Riemannian submersion
$\rho : \hat M \to W:= \hat M/ \bar {\hat {\mathcal F}}$. We denote the
typical fiber of this submersion, i.e., the homeomorphism type of the
closures of leaves of ${\hat {\mathcal F}}$, by $N$.

The action of $G$ on $\hat M$ descends to an action of $G$ on $W$ in
such a way that $\rho$ becomes a $G$-equivariant fiber bundle. The
quotient space $W/G$ is canonically identified with the space of leaf
closures of $\mathcal F$, i.e., $W/G = M/\bar {\mathcal F}$. Let $N_0$ be a leaf of $\bar {\hat {\mathcal F}}$, hence a point in $W$.
Denote by $G_{N_0}$ the isotropy group of the point $\{ N_0\} \in W$, which is just the set of all elements in $G$ sending the submanifold $N_0\subset \hat M$ to itself.
The action of $G_{N_0}$ on $N_0$  is free and the quotient space $N_0 /G_{N_0}$
is the closure  $\bar L_0$ of a leaf $L_0$ of $\mathcal F$.
The action of $G_{N_0}$ on the submanifold $N_0 \subset \hat M$ has orbits transversal
to $\hat {\mathcal F}$.    Therefore we conclude from the preceding considerations:

\begin{lem}  \label{genclosed}
With the notations above, if $l$ denotes the codimension of $\hat {\mathcal {F}}$ in $N_0$, then $\dim (G_{N_0}) \leq  l$, and equality holds
if and only if the projection $\bar L_0$ of $N_0$ is a closed leaf of the original foliation $\mathcal F$.
\end{lem}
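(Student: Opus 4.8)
The plan is to reduce the whole statement to a dimension count on $N_0$, using only the structural facts just recalled. Set $d := n-k = \dim\mathcal{F} = \dim\hat{\mathcal{F}}$; since $l$ is by definition the codimension of $\hat{\mathcal{F}}$ in $N_0$, we have $\dim N_0 = d+l$. First I would establish the inequality. Because the $G_{N_0}$-action on $N_0\subset\hat M$ is free, every orbit is a submanifold of dimension exactly $\dim G_{N_0}$. By the preceding discussion these orbits are transversal to $\hat{\mathcal{F}}$ restricted to $N_0$, so at each point $p\in N_0$ the tangent space to the orbit meets the $d$-dimensional tangent space of the leaf of $\hat{\mathcal{F}}$ through $p$ only in $0$; since $\dim T_pN_0=d+l$, this forces $\dim G_{N_0}\le l$.

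Next I would analyze the equality case via the quotient. Freeness of the action gives that $\bar L_0=N_0/G_{N_0}$ is a manifold with $\dim\bar L_0=\dim N_0-\dim G_{N_0}=d+(l-\dim G_{N_0})$. On the other hand $L_0$ is a $d$-dimensional leaf of $\mathcal{F}$, and the restriction $\mathcal{F}|_{\bar L_0}$ is a Riemannian foliation with dense leaves, all of dimension $d$; hence its codimension in the connected manifold $\bar L_0$ equals $\dim\bar L_0-d=l-\dim G_{N_0}\ge 0$, which re-proves $\dim G_{N_0}\le l$. Now $\bar L_0$ is a closed leaf of $\mathcal{F}$ exactly when $L_0=\bar L_0$, i.e. when $\mathcal{F}|_{\bar L_0}$ has codimension $0$; since a foliation of codimension $0$ on a connected manifold has a single leaf — which, being dense, must be all of $\bar L_0$ — and conversely, this happens if and only if $l=\dim G_{N_0}$. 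This gives the claimed equivalence.

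The hard part is essentially bookkeeping rather than substance: keeping the three dimensions $\dim N_0=d+l$, $\dim L_0=d$ and $\dim\bar L_0=d+(l-\dim G_{N_0})$ consistent, and being careful that ``transversal to $\hat{\mathcal{F}}$'' really means the orbit directions are linearly independent from the leaf directions of $\hat{\mathcal{F}}|_{N_0}$, so that the codimension $l$ of $\hat{\mathcal{F}}$ in $N_0$ genuinely bounds the orbit dimension. Once this point is granted, no input beyond the already-established structure of the Molino construction and the description of Riemannian foliations with dense leaves is needed.
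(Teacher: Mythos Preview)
Your argument is correct and is exactly the dimension count the paper has in mind: the lemma is stated there without a separate proof, just as an immediate consequence (``Therefore we conclude from the preceding considerations'') of the freeness and transversality of the $G_{N_0}$-action on $N_0$ and of $N_0/G_{N_0}=\bar L_0$. You have simply written out the bookkeeping explicitly.
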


\subsection{Relation to equivariant cohomology}\label{seceqcohom}
For the $G$-spaces $\hat M$ and $W$ as above, we denote their Borel
constructions by $\hat M_G:=\hat M\times_G EG$ and $W_G=W\times_G EG$, respectively (cf.~\cite{AlldayPuppe} and Appendix
\ref{sec:techprop} below). By functoriality, we obtain a natural
fibration $\rho_G:\hat M_G\to W_G$ with typical fiber $N$. Since $G$ acts
freely on $\hat M$, we obtain a fibration $EG\to \hat M_G\to \hat M/G=M$
with contractible fiber $EG$, which implies that $M$ and $\hat M_G$ are
homotopy equivalent. Using the above notation we then have:

\begin{prop} \label{genstr}
The Borel construction $\hat M_G$ of the Molino bundle $\hat M$ is
homotopy equivalent to $M$, and the fibration $\hat M_G \to W_G$ has
fiber $N$.
\end{prop}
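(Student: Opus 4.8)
The plan is to deduce both assertions from the functoriality of the Borel construction, using crucially that $G$ acts freely on $\hat M$ and on $EG$.

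First I would establish the homotopy equivalence $\hat M_G \simeq M$. Since $\pi\colon\hat M\to M$ is a principal $G$-bundle, $G$ acts freely on $\hat M$, hence diagonally freely on $\hat M\times EG$. The projection $\hat M\times EG\to\hat M$ is a $G$-equivariant homotopy equivalence because $EG$ is contractible, so passing to the free quotients yields a homotopy equivalence $\hat M_G=(\hat M\times EG)/G\to\hat M/G=M$. (Equivalently: $\hat M_G\to M$ is a fiber bundle with contractible fiber $EG$, hence a weak homotopy equivalence between spaces having the homotopy type of CW complexes, hence a homotopy equivalence.)

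Next I would treat the map $\rho_G\colon\hat M_G\to W_G$. Recall from Molino's construction that $\rho\colon\hat M\to W$ is a $G$-equivariant fiber bundle with typical fiber $N$. Applying $(-)\times_G EG$, which is functorial, produces $\rho_G$, and I claim it is again a fiber bundle with fiber $N$. To check this I would work over a point $[w,e]\in W_G$: the map $\rho\times\mathrm{id}\colon\hat M\times EG\to W\times EG$ is a $G$-equivariant fiber bundle with fiber $N$, and since $G$ acts freely on $EG$ the diagonal $G$-action on $W\times EG$ is free, so the fiber of $\rho_G$ over $[w,e]$ is $\rho^{-1}(w)\times\{e\}$ modulo the trivial stabilizer of $(w,e)$, i.e.\ a copy of $\rho^{-1}(w)\cong N$; local triviality of $\rho_G$ is inherited from that of $\rho$.

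The step requiring the most care — and what I would regard as the main, if mild, obstacle — is ruling out that the $G$-action on the base $W$ distorts the fiber: a priori $\rho_G$ might have fibers that are quotients of $N$ by isotropy groups, or fail to be locally trivial near points of $W$ with nontrivial stabilizer. This is precisely what the freeness of the $G$-action on $EG$ takes care of, forcing the diagonal actions on $\hat M\times EG$ and $W\times EG$ to be free and hence making the relevant quotient maps behave like honest bundle projections. Putting the two parts together gives the fibration $N\to\hat M_G\to W_G$ together with $\hat M_G\simeq M$.
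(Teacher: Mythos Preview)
Your proposal is correct and follows essentially the same approach as the paper: the paper argues in the paragraph preceding the proposition that functoriality of the Borel construction turns the $G$-equivariant fibration $\rho:\hat M\to W$ into a fibration $\rho_G:\hat M_G\to W_G$ with fiber $N$, and that the free $G$-action on $\hat M$ yields a fibration $EG\to\hat M_G\to M$ with contractible fiber, hence the homotopy equivalence. Your write-up is more explicit about why the fiber of $\rho_G$ is genuinely $N$ rather than a quotient --- namely because freeness of the diagonal action comes from freeness on the $EG$ factor --- but this is elaboration of the same idea rather than a different route.
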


The foliation $\mathcal F$ is closed if and only if $\hat {\mathcal F}$
is closed. In this case the quotient space $B= M/\mathcal F$ is a
Riemannian orbifold and $M\to B$ a generalized Seifert fibration. The
leaves of $\hat {\mathcal F}$ are diffeomorphic via the projection
$\pi :\hat M \to M$ to the regular leaves $L$ of~$\mathcal F$. Moreover,
the Borel construction $W_G$ in \pref{genstr} coincides by definition
with Haefliger's \emph{classifying space} $\hat B$ of $B$
(\cite{classify}, Section 4). In particular, the projection
$W_G=\hat B \to B=W/G$ induces an isomorphism on rational (co)homologies.

\subsection{The simply connected case}

We now assume that the complete Riemannian manifold $M$ is simply
connected. Then the Riemannian foliation $\mathcal F$ is transversally
oriented and we have the whole structure described in the previous two
subsections.  Moreover, the leaf closures $N$ of $\hat {\mathcal F}$ have
abelian fundamental group.    In addition, the stabilizer $G_{N_0}$ of
any point $N_0 \in W$ has  trivial adjoint representation.
In particular, its identity component is a torus \cite[Lemma
4.6]{GT}.  We claim:

\begin{lem} \label{injectlemma}
With the notations above, let $T$ be the  connected component of
$G_{N_0}$. Then the map $\pi_1 (T) \to \pi_1 (N_0)$,  induced by
sending $T$ to an orbit of the action of $T$ on $N_0$  is an injection.
\end{lem}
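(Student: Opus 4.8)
The plan is to pass to the universal cover $\tilde N_0$ of $N_0$ and compare the $T$-orbit of a point with its image under the developing map provided by \lref{haefligerlem}. Since $\pi_1(N_0)$ is abelian, that lemma gives a submersion $p\colon\tilde N_0\to\mathbb R^l$ whose fibers are the leaves of the lifted foliation $\tilde{\hat{\mathcal F}}$, where $l$ is the codimension of $\hat{\mathcal F}$ in $N_0$; moreover the deck group $\pi_1(N_0)$ acts on $\mathbb R^l$ so that $p$ is equivariant, and by Haefliger's structure theory this action is by translations through a homomorphism $h\colon\pi_1(N_0)\to\mathbb R^l$ with dense image. Fix $\tilde x_0\in\tilde N_0$ over $x_0\in N_0$. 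The map in the statement is induced on $\pi_1$ by the orbit map $\mu\colon T\to N_0$, $t\mapsto t\cdot x_0$ (an embedding of the torus $T$, since the $T$-action is free), and I would prove it injective by developing this orbit into $\mathbb R^l$.

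Write $r=\dim T$ and let $\pi_T\colon\mathbb R^r\to T$ be the universal covering. The composite $\mathbb R^r\times\tilde N_0\xrightarrow{\pi_T\times e}T\times N_0\to N_0$ is defined on a simply connected space, so it lifts through the covering $e\colon\tilde N_0\to N_0$ to a map $\tilde a\colon\mathbb R^r\times\tilde N_0\to\tilde N_0$ with $\tilde a(0,\tilde x_0)=\tilde x_0$; by uniqueness of lifts $\tilde a$ is an action of $\mathbb R^r$, it preserves $\tilde{\hat{\mathcal F}}$ because the $T$-action preserves $\hat{\mathcal F}$, and it commutes with the deck group $\pi_1(N_0)$, as the lift of the action of a connected group always does. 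Hence $\tilde a$ descends to a continuous action $\bar a$ of $\mathbb R^r$ on the leaf space $\mathbb R^l$ with $p\circ\tilde a_v=\bar a_v\circ p$, and since $\tilde a_v$ commutes with deck transformations, $\bar a_v$ commutes with translation by $h(\gamma)$ for every $\gamma\in\pi_1(N_0)$. As $h$ has dense image and $\bar a_v$ is continuous, $\bar a_v$ then commutes with every translation of $\mathbb R^l$, hence is itself a translation; thus $v\mapsto\bar a_v$ is a continuous homomorphism $\phi\colon\mathbb R^r\to\mathbb R^l$, that is, a linear map. Consequently the lift $\tilde\mu(v):=\tilde a_v(\tilde x_0)$ of $\mu\circ\pi_T$ satisfies $p(\tilde\mu(v))=p(\tilde x_0)+\phi(v)$, so the $T$-orbit develops to an affine subspace of $\mathbb R^l$.

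I would then show $\phi$ is injective using that the $T$-orbits are transversal to $\hat{\mathcal F}$. Freeness of the $T$-action makes the $\mathbb R^r$-orbit map $v\mapsto\tilde a_v(\tilde x_0)$ an immersion, so the differential of $\tilde\mu$ at $0$ is injective with image the tangent space to the $\mathbb R^r$-orbit of $\tilde x_0$; under $de$ this maps isomorphically onto $T_{x_0}(T\cdot x_0)$, which by transversality (together with $\dim G_{N_0}\le l$ from \lref{genclosed}) meets the tangent space to the $\hat{\mathcal F}$-leaf trivially. Hence $dp$ at $\tilde x_0$ is injective on the tangent space to the $\mathbb R^r$-orbit, so $\phi=d(p\circ\tilde\mu)_0$ is injective and $v\mapsto p(\tilde\mu(v))$ is injective. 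Finally, if $\gamma\in\pi_1(T)=\ker\pi_T\subset\mathbb R^r$ is killed by the map $\pi_1(T)\to\pi_1(N_0)$, then the deck transformation of $\tilde N_0$ it induces is trivial, so $\tilde\mu$ is invariant under translation by $\gamma$; in particular $\tilde\mu(\gamma)=\tilde\mu(0)$, hence $\phi(\gamma)=0$ and $\gamma=0$. This is exactly the claimed injectivity.

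The step I expect to be the crux is the passage, inside the second paragraph, from the lifted $\mathbb R^r$-action on $\tilde N_0$ to an action by translations on the developing space $\mathbb R^l$: it is the only place where Haefliger's classification of Riemannian foliations with dense leaves is genuinely used, and it depends both on the commutation with the deck group and on the density of $h(\pi_1(N_0))$ in $\mathbb R^l$. Once the $T$-orbit is known to develop to an affine subspace, the rest is a one-point transversality computation.
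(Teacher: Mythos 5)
Your proof is correct and follows essentially the same route as the paper: lift the torus action to an $\mathbb R^r$-action on $\tilde N_0$, push it down via \lref{haefligerlem} to the leaf-space $\mathbb R^l$, and use transversality of the $G_{N_0}$-orbits to $\hat{\mathcal F}$ to see that the induced action is free along the orbit of the basepoint. The only cosmetic difference is that where the paper invokes ``an almost free isometric action of $\tilde T$ on $\mathbb R^l$ must be free,'' you show explicitly (via commutation with the dense translation group $h(\pi_1(N_0))$) that the action is by translations and that the resulting linear map $\phi$ is injective --- a more detailed execution of the same idea.
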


\begin{proof}
Consider the lift of the action of $T$ on $N_0$ to the action
of the universal cover $\tilde T$ on $\tilde N_0$. By Lemma \ref{haefligerlem}, the foliation
$\hat {\mathcal F}$ lifts to a simple foliation on $\tilde N_0$ with
quotient $\mathbb R ^l$. Since the action of $T$ preserves the foliation and is transversal to it, we get an almost free isometric action of $\tilde T$ on $\mathbb R^l$.  Thus this action of $\tilde T$ on $\mathbb R^l$ must be free.

  Hence the action of $\tilde T$ on $\tilde N_0$ is free as well.
This directly implies that the kernel of the map
$\pi _1 (T) \to \pi_1 (N_0)$ is trivial.
\end{proof}

 Let~$l$ denote the codimension of $\hat {\mathcal F}$ in its
closure $\bar {\hat {\mathcal F}}$. Because $N$ is the fiber of the fibration $\hat M_G\to W_G$ with simply-connected total space, $\pi_1(N)$ is abelian.  Due to \lref{haefligerlem}, we have a homomorphism $h: \pi_1 (N) \to \mathbb R ^l$ with a dense image.
Hence, $\pi_1 (N) \otimes \mathbb Q$ is a $\mathbb Q$-vector space of dimension at least $l$.
If $\pi_1 (N)$ is finitely generated,  and the dimension of $\pi_1 (N) \otimes \mathbb Q$
is $l$ then   the image of $h$ would be a lattice in $\mathbb R ^l$, hence it could not
have a dense image, unless $l=0$.
We have shown:

\begin{lem} \label{injective}
We have the inequality $H^1 (N, \mathbb Q)  \geq l$.  In case of  equality, either
$l=0$ and the foliation $\mathcal F$ is closed, or the group $\pi_1 (N)$ is not finitely
generated and satisfies $\pi_1 (N) \otimes \mathbb Q = \mathbb Q^l$.
\end{lem}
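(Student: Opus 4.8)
The plan is to extract everything from the Molino picture assembled above together with \lref{haefligerlem} and the universal coefficient theorem. Recall from \pref{genstr} that $N$ is the fiber of the fibration $\hat M_G\to W_G$ whose total space $\hat M_G$ is homotopy equivalent to the simply connected manifold $M$; the homotopy exact sequence then presents $\pi_1(N)$ as a quotient of the abelian group $\pi_2(W_G)$, so $\pi_1(N)$ is abelian. Identifying $N$ with a leaf closure $N_0$ of $\hat{\mathcal F}$ and invoking \lref{haefligerlem}, we obtain a homomorphism $h\colon\pi_1(N)\to\mathbb R^l$ with dense image, $l$ being the codimension of $\hat{\mathcal F}$ in $\bar{\hat{\mathcal F}}$.

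First I would establish the inequality. Since $\mathbb R^l$ is a $\mathbb Q$-vector space, $h$ factors through $\pi_1(N)\otimes\mathbb Q$, and I would use the elementary fact that a subgroup $B\subseteq\mathbb R^l$ with $\operatorname{rank}_{\mathbb Q}B=r$ is contained in an $r$-dimensional real subspace: picking $b_1,\dots,b_r\in B$ that form a $\mathbb Q$-basis of $B\otimes\mathbb Q$, any $b\in B$ satisfies $mb=\sum n_ib_i$ for integers $m\neq 0$, $n_i$, hence lies in $\mathbb R b_1+\dots+\mathbb R b_r$. Applied to $B=h(\pi_1(N))$, density forces $r\geq l$, so $\dim_{\mathbb Q}(\pi_1(N)\otimes\mathbb Q)\geq r\geq l$. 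By the universal coefficient theorem $H^1(N,\mathbb Q)\cong\operatorname{Hom}(H_1(N;\mathbb Z),\mathbb Q)=\operatorname{Hom}(\pi_1(N),\mathbb Q)$, whose dimension is at least $\dim_{\mathbb Q}(\pi_1(N)\otimes\mathbb Q)\geq l$.

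For the equality case, assume $\dim_{\mathbb Q}H^1(N,\mathbb Q)=l$. Since $H^1(N,\mathbb Q)$ is the dual of $\pi_1(N)\otimes\mathbb Q$, this space must be finite dimensional, of dimension exactly $l$, and the induced map $\pi_1(N)\otimes\mathbb Q\to\mathbb R^l$ is then injective, so $h(\pi_1(N))$ has rank $l$. If $l=0$ we are done, observing that $l=0$ says precisely that $\hat{\mathcal F}$, hence $\mathcal F$, is closed. If $l\geq1$, I would rule out $\pi_1(N)$ being finitely generated: in that case $h(\pi_1(N))$ would be a finitely generated subgroup of $\mathbb R^l$ of rank $l$, and such a group is necessarily discrete — choosing a $\mathbb Q$-basis $b_1,\dots,b_l$ of it, density forces the $b_i$ to be $\mathbb R$-linearly independent, so they generate a lattice $\Lambda$, and $h(\pi_1(N))/\Lambda$ is finitely generated and torsion, hence finite, making $h(\pi_1(N))$ discrete and contradicting density. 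Therefore $\pi_1(N)$ is not finitely generated and $\pi_1(N)\otimes\mathbb Q\cong\mathbb Q^l$.

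The argument is essentially routine once \lref{haefligerlem} is granted; the one point that needs care is the equality-case dichotomy, namely the claim that a finitely generated dense subgroup of $\mathbb R^l$ with $l\geq 1$ cannot have $\mathbb Q$-rank equal to $l$ (it must have rank at least $l+1$), since this is exactly what forbids the finitely generated alternative and thereby pins down the closed case $l=0$ as the only source of equality with finitely generated $\pi_1(N)$.
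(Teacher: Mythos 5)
Your proposal is correct and follows essentially the same route as the paper: abelianness of $\pi_1(N)$ from the fibration $\hat M_G\to W_G$, the dense homomorphism $h\colon\pi_1(N)\to\mathbb R^l$ from \lref{haefligerlem} forcing $\dim_{\mathbb Q}(\pi_1(N)\otimes\mathbb Q)\geq l$, and the observation that a finitely generated dense subgroup of rank exactly $l$ would be a lattice, which is impossible unless $l=0$. You merely spell out the elementary rank/lattice and universal-coefficient details that the paper leaves implicit.
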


Combining this lemma with \lref{genclosed} we obtain:

\begin{prop} \label{ghysidea}
For any   $N_0 \in W$ we have the inequality $\dim (G_{N_0} ) \leq \dim (H^1 (N_0 ,\mathbb Q))$.   If equality holds then  $N_0$  projects to a closed leaf of
$\mathcal F$.  If, in addition, $\pi_1 (N_0)$ is finitely generated then the foliation
$\mathcal F$ is a closed foliation.
\end{prop}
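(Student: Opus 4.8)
The plan is to obtain \pref{ghysidea} as an immediate combination of \lref{genclosed} and \lref{injective}; no new geometric input is needed. Fix $N_0\in W$ and let $l$ be the codimension of $\hat{\mathcal F}$ inside the leaf closure $N_0$. Since $\rho\colon\hat M\to W$ is a fiber bundle with typical fiber $N$, the submanifold $N_0$ is homeomorphic to $N$, and this $l$ coincides with the codimension of $\hat{\mathcal F}$ in its closure that appears in \lref{injective}. First I would apply \lref{genclosed} to get $\dim(G_{N_0})\le l$, recording that equality here is equivalent to $N_0$ projecting to a closed leaf of $\mathcal F$. Then \lref{injective}, applied to $N\cong N_0$, gives $l\le\dim H^1(N_0,\mathbb Q)$. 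Concatenating the two yields $\dim(G_{N_0})\le\dim H^1(N_0,\mathbb Q)$, which is the first assertion.

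For the equality case, suppose $\dim(G_{N_0})=\dim H^1(N_0,\mathbb Q)$. Then the chain $\dim(G_{N_0})\le l\le\dim H^1(N_0,\mathbb Q)=\dim(G_{N_0})$ collapses, so both $\dim(G_{N_0})=l$ and $l=\dim H^1(N_0,\mathbb Q)$ hold. The first equality, via the equality clause of \lref{genclosed}, shows that $N_0$ projects to a closed leaf of $\mathcal F$. The second equality puts us in the equality case of \lref{injective}: thus either $l=0$ and $\mathcal F$ is closed, or $\pi_1(N_0)$ is not finitely generated. Under the additional hypothesis that $\pi_1(N_0)$ is finitely generated the latter is impossible, so $l=0$ and $\mathcal F$ is a closed foliation.

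I do not expect a genuine obstacle: the proposition is bookkeeping on top of the two lemmas. The only points requiring care are the identification of the pointwise codimension $l$ at $N_0$ with the global invariant used in \lref{injective} --- which relies on $\rho$ being a fiber bundle, so that all leaf closures of $\hat{\mathcal F}$ are mutually homeomorphic --- and the observation that the single scalar equality $\dim(G_{N_0})=\dim H^1(N_0,\mathbb Q)$ propagates through the sandwich of inequalities and forces both intermediate equalities simultaneously.
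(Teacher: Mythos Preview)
Your proposal is correct and matches the paper's approach exactly: the paper states the proposition with the single sentence ``Combining this lemma with \lref{genclosed} we obtain'', and your argument is precisely that combination, with the sandwich $\dim(G_{N_0})\le l\le\dim H^1(N_0,\mathbb Q)$ and the propagation of equality through it. Your care points about identifying $N_0$ with the typical fiber $N$ via the fiber bundle $\rho$ are appropriate and require no further justification beyond what the paper already provides.
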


\vskip .5cm

\section{The main argument} \label{secmain}
We now give a proof of \tref{mainthm}, modulo some auxiliary topological
results which will be proven in the appendices. We continue to use the
notation introduced in the previous section, and assume that $M$ is
simply-connected and rationally contractible.

Consider the fibration
$\hat M_G \to W_G$ with fiber $N$. The space $\hat M _G$ is simply
connected, hence so is $W_G$ and the space $N$ is \emph{abelian}, i.e.,
its fundamental group is abelian and acts trivially on higher homotopy
groups (\cite{Hatcher}, p.~419, Exercise 10). The homotopy fiber of the
embedding map $N \to \hat M_G$ is the loop space $\Omega W_G$
(\cite{Hatcher}, p.~409). Since all homotopy groups of $\hat M_G$ are
torsion groups, the long exact homotopy sequence of this fibration
reveals that the map $p:\Omega W_G \to N$ induces isomorphisms on all
rational homotopy groups
$p_{\ast}: \pi_{\ast} ( \Omega W_G)
\otimes \mathbb Q \to \pi_{\ast} (N ) \otimes \mathbb Q$.

Since $N$ and $\Omega W_G$ are abelian topological spaces, the map $p$
induces an isomorphism of rational cohomology rings
$p^{\ast} :H^{\ast} (N, \mathbb Q) \to H^{\ast} (\Omega W_G ,\mathbb Q)$.
Thus the rational cohomology of the $H$-space $\Omega W_G$ (see Apendix
\ref{secprel} for more information on $H$-spaces) vanishes in degrees
larger than the dimension of $N$. We now use the following result, a
slight extension of a classical theorem of Hopf (\cite{Hatcher}, p.~285),
whose proof is postponed to Appendix \ref{secprel}, and deduce that the
rational cohomology ring of $N$ is the cohomology ring of a finite
product of spheres.

\begin{prop} \label{hopfspace}
Let $X$ be a connected $H$-space. Let $K$ be a field and assume that
$H^{\ast} (X,K)$ vanishes in all degrees larger than $m$. Then $H^{\ast}
(X,K)$ has dimension at most~$2^m$ over $K$.

Moreover, if $K$ has characteristic $0$, then $H^{\ast} (X,K)$ is
isomorphic to the cohomology ring of a product of spheres. Thus $H^{\ast}
(X,K) $ is the antisymmetric algebra
$\Lambda (y_1,\ldots,y_q)$ with $\deg y_i=:e_i$ odd.
\end{prop}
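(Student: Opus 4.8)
The plan is to use the classical structure theory of Hopf algebras over a field, as in the Hopf–Borel theorem, applied to the graded‑commutative Hopf algebra $A := H^\ast(X,K)$. First I would recall that for a connected $H$‑space $X$, the cohomology $A$ carries the structure of a graded‑commutative, connected Hopf algebra over $K$ (the comultiplication induced by the $H$‑space multiplication $X\times X\to X$ together with the Künneth isomorphism, which is available since $K$ is a field). Borel's structure theorem then says that, as an algebra, $A$ is (up to a filtration, or exactly if one is willing to work with the associated graded) a tensor product of \emph{monogenic} Hopf algebras: exterior algebras $\Lambda(y)$ on odd‑degree generators, polynomial algebras $K[x]$ on even‑degree generators, and — only in positive characteristic $p$ — truncated polynomial algebras $K[x]/(x^{p^s})$.

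Next I would feed in the hypothesis that $A$ vanishes in all degrees $>m$. This forces $A$ to be finite‑dimensional, so no free polynomial factor $K[x]$ can appear (such a factor is infinite‑dimensional). Hence $A$ is, as a graded vector space, a tensor product of finitely many factors, each of which is either $\Lambda(y_i)$ with $\deg y_i$ odd, or (in characteristic $p$) a truncated polynomial algebra $K[x_j]/(x_j^{p^{s_j}})$ with $\deg x_j$ even and positive. Each such factor contributes at least one generator in a positive degree $\le m$, and each factor has dimension $\ge 2$ over $K$; since the product of the top degrees of the factors is at most $m$ (the top nonzero degree of $A$), there can be at most $m$ such factors, and therefore $\dim_K A \le 2^m$. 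This is a clean counting argument once the structure theorem is in hand; the only subtlety is to note that a truncated polynomial factor $K[x]/(x^{p^s})$ with $\deg x = 2d$ has top degree $2d(p^s-1)\ge 2d \ge 2$ and dimension $p^s\ge 2$, so the same bound $\dim \le 2^{(\text{top degree})}$ holds factorwise and multiplies correctly.

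Finally, for the characteristic‑zero statement: over a field of characteristic $0$ there are no truncated polynomial Hopf algebras (the binomial identities that obstruct them in characteristic $p$ are vacuous), so Borel's theorem gives that $A$ is, as an \emph{algebra}, a tensor product of exterior algebras $\Lambda(y_1)\otimes\cdots\otimes\Lambda(y_q)$ and polynomial algebras; finite‑dimensionality again kills the polynomial factors, leaving $A \cong \Lambda(y_1,\dots,y_q)$ with each $e_i := \deg y_i$ odd. This is precisely the rational cohomology ring of the product of odd‑dimensional spheres $S^{e_1}\times\cdots\times S^{e_q}$, which proves the last assertion.

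\medskip

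The main obstacle I anticipate is not conceptual but one of citation and generality: the paper must invoke Borel's structure theorem for \emph{graded} Hopf algebras over an arbitrary field without assuming $X$ is a \emph{finite} complex (which is the classical Hopf–Borel setting), so one needs the purely algebraic version of the theorem (e.g.\ Milnor–Moore type results, or Borel's original statement phrased for connected graded‑commutative Hopf algebras). The hypothesis "$H^\ast(X,K)$ vanishes above degree $m$" is exactly what is needed to bypass finiteness of $X$: it makes $A$ finite‑dimensional directly, after which the algebraic structure theorem applies verbatim. I would therefore spend the bulk of the written proof isolating the algebraic input as a lemma about Hopf algebras and then performing the elementary dimension count.
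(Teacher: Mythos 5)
There is a genuine gap, and it sits exactly at the point you dismiss as unproblematic. You assert that the hypothesis ``$H^\ast(X,K)$ vanishes in degrees $>m$'' makes $A=H^\ast(X,K)$ finite-dimensional directly, and that the K\"unneth isomorphism is available ``since $K$ is a field.'' Neither claim is correct as stated. The hypothesis only bounds the degrees in which $A$ is nonzero; a priori some single group $H^d(X,K)$ with $d\le m$ could be an infinite-dimensional $K$-vector space (the spaces to which the proposition is applied in the paper are non-compact manifolds whose homology need not be finitely generated). And when $H^\ast(X,K)$ is not of finite type, the cohomology K\"unneth map $H^\ast(X)\otimes H^\ast(X)\to H^\ast(X\times X)$ is only an injection, not an isomorphism: one has $H^k(X\times X)=\bigoplus_i \Hom_K(H_i(X)\otimes H_{k-i}(X),K)$, which strictly contains $H^i(X)\otimes H^{k-i}(X)$ when both factors are infinite-dimensional. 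Hence $\mu^\ast$ need not land in $A\otimes A$, and $A$ does not automatically carry a Hopf algebra structure, so Borel's structure theorem cannot be ``applied verbatim.'' This is precisely the difficulty the proposition is designed to overcome; the rest of your argument (ruling out polynomial factors, the factorwise dimension count, the characteristic-zero case) is fine once finite-dimensionality is known, and matches the paper's use of the Hopf--Borel theorem.

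The paper closes the gap by a bootstrapping argument: assuming $A$ is infinite-dimensional, let $d$ be the smallest degree with $\dim H^d(X)=\infty$ and let $C$ be the subalgebra generated in degrees $\le d$. For $\alpha$ of degree $k\le d$ one has $\mu^\ast(\alpha)=1\otimes\alpha+\alpha\otimes 1+r(\alpha)$ with $r(\alpha)$ supported in bidegrees $(i,k-i)$, $0<i<k$, where $H^i(X)$ is finite-dimensional, so these summands really are tensor products and $\mu^\ast(C)\subset C\otimes C$. Thus $C$ is a genuine (connected) Hopf algebra; exhausting it by finite-dimensional Hopf subalgebras and applying the $2^m$ bound from Hopf--Borel shows $C$ is finite-dimensional, contradicting $H^d(X)\subset C$. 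You would need to supply an argument of this kind (together with the reduction, via universal coefficients, to the perfect fields $\mathbb Q$ and $\mathbb F_p$, which Borel's theorem requires) before your dimension count can get started.
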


Given that $H^{\ast} (N,\mathbb Q)= \Lambda(y_1,\ldots,y_q)$ with
$\deg y_i=:e_i$ odd, and the assumption that
$H^{\ast} (\hat M_G, \mathbb Q)$ vanishes in positive degrees, the
transgression theorem of Borel (\cite{Borel}, Th\'eor\`eme 13.1, see also
\cite{MimuraToda}, Theorem VII.2.9) gives us that the cohomology ring
$H^{\ast} (W_G ,\mathbb Q)$ is a polynomial ring
$\mathbb Q[x_1,...,x_q]$, generated by elements $x_i$ of degree $e_i +1$.
In particular, $\dim H^2 (W_G, \mathbb Q)= \dim H^1 (N, \mathbb Q)$.
This dimension  is exactly the number of elements $y_i$   of degree $1$.

Now, we can apply the following result (essentially Theorem 7.7 from
\cite{Quillen}), whose proof will be explained in Appendix
\ref{sec:techprop}, in order to find a lower bound on the ranks and thus
also bounds on the dimension of stabilizers of the action of $G$ on $W$.

\begin{prop} \label{prop:eqcohinjection}
Let $G$ be a compact Lie group that acts smoothly on a manifold~$X$.
Assume that the $G$-equivariant cohomology ring $H^{\ast} (X_G)$ is a
polynomial ring in $q$ variables. Then there is a point $x\in X$ such
that the rank of the stabilizer $G_x$ is at least~$q$.
\end{prop}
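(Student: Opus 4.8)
The plan is to exploit the well-known relationship between the Krull dimension of the equivariant cohomology ring and the maximal rank of an isotropy group, which is precisely the content of the Quillen-type stratification theorem for a compact Lie group action. First I would invoke (or re-derive in Appendix \ref{sec:techprop}) the general fact that for a compact Lie group $G$ acting smoothly on a manifold $X$, the Krull dimension of $H^*(X_G;\mathbb Q)$ equals the maximal rank of a stabilizer $G_x$ over $x \in X$; this is the rational analogue of Quillen's theorem, valid for manifolds with finitely generated cohomology, and it is exactly the ``small generalization of a well-known result relating ranks of isotropy groups to the Krull dimension of equivariant cohomology rings'' mentioned in the introduction. Once this is available, the argument is immediate: a polynomial ring $\mathbb Q[x_1,\ldots,x_q]$ in $q$ variables has Krull dimension exactly $q$, so there must be a point $x\in X$ with $\operatorname{rank}(G_x)\ge q$.

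More concretely, the key steps are: (1) reduce to $G$ a torus by passing to a maximal torus $T\subset G$, using that $H^*(X_G;\mathbb Q)$ is (up to a finite extension / integral closure issue) detected by $H^*(X_T;\mathbb Q)$ with its Weyl-group action, so that the Krull dimensions agree; (2) for a torus $T$, apply the localization theorem, writing $H^*(X_T;\mathbb Q)$ as a module over $H^*(BT;\mathbb Q)=\mathbb Q[t_1,\ldots,t_r]$ and observing that the support of this module in $\operatorname{Spec} H^*(BT;\mathbb Q)$ is the union, over the connected isotropy subtori $S\subset T$, of the linear subspaces corresponding to $H^*(BT/B S)$; (3) conclude that the Krull dimension of $H^*(X_T;\mathbb Q)$, hence of $H^*(X_G;\mathbb Q)$, is the maximal dimension of such an isotropy subtorus, i.e.\ the maximal rank of a stabilizer; (4) finally, since $\mathbb Q[x_1,\ldots,x_q]$ has Krull dimension $q$, extract the point $x$ with $\operatorname{rank}(G_x)\ge q$.

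The main obstacle I expect is not the torus case, which is classical, but making sure the reduction from $G$ to its maximal torus and the localization/support arguments go through for a (possibly non-compact) manifold $X$ whose equivariant cohomology ring happens to be finitely generated — Quillen's original results are stated for compact or finite-dimensional $G$-CW complexes with appropriate finiteness. In our situation finiteness of $H^*(X_G;\mathbb Q)$ as a ring (it is a polynomial ring, hence Noetherian) should be enough to run the localization machinery after replacing $X$ by a suitable $G$-invariant exhaustion or by using that $H^*(BT;\mathbb Q)$ is Noetherian so that the module structure is finitely generated in each degree; this bookkeeping is exactly what I would relegate to Appendix \ref{sec:techprop}. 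A secondary subtlety is that Quillen's theorem concerns $\mathbb{F}_p$-coefficients and elementary abelian $p$-subgroups, so I would be careful to use the rational (torus) version throughout, where the relevant invariant is genuinely the rank of the stabilizer rather than its $p$-torsion, and where the polynomial-ring hypothesis gives a clean integer $q$ as the Krull dimension with no prime-by-prime comparison needed.
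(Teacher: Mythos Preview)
Your proposal is correct and follows essentially the same route as the paper: reduce to a torus via $H^*_G(X)=H^*_{N(T)}(X)$ and Noether's finite-invariants theorem, then invoke the classical Krull-dimension/isotropy-rank result (Allday--Puppe) for torus actions, so that a polynomial ring in $q$ variables forces a stabilizer of rank $\ge q$. The only point where the paper is more specific than your outline is the finiteness bookkeeping: rather than an exhaustion argument, it shows directly (via a Serre-class argument on the fibration $X\times EG\to X_G$) that $H^*(X;\mathbb Q)$ is finite-dimensional, and then proves a short lemma (using equivariant Thom classes) that finite-dimensionality of each $H^k_G(X)$ forces only finitely many connected isotropy subtori, which is exactly the hypothesis needed to cite the Allday--Puppe result.
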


Applying this proposition to our action of $G =SO(k)$ on $W$, we find
some point $N_0\in W$ such that its stabilizer $G_{N_0}$ has rank and
thus dimension at least $q$. On the other hand, we know that
$q \geq \dim H^2 (W_G ,\mathbb Q) =\dim H^1 (N_0, \mathbb Q)$.
We apply \pref{ghysidea}  and deduce that $q=\dim H^1 (N_0 ,\mathbb Q)$
and  that $N_0$ projects to a closed leaf $L_0$ of $\mathcal F$ in $M$.

Now we are going
to analyze the topology of this closed leaf $L_0$.  Since $q=\dim H^1 (N_0 ,\mathbb Q)$, all generators $y_i$ of the cohomology of $N_0$ have degree $1$
and $N_0$ has the rational cohomology ring of a $q$-dimensional torus.

We claim that all higher homotopy groups $\pi_j (N_0), j\geq 2$ of $N_0$ are torsion groups.     Indeed, let $E=\mathbb S^1_\mathbb Q$ be the Eilenberg MacLane space
$K(\mathbb Q ,1)$, which is just the rationalization of the circle $\mathbb S^1$.
We have $H^{\ast} (E, \mathbb Q)= H^{\ast} (S^1, \mathbb Q)$.
This space $E$ is the classifying space for $H^1(\cdot, \mathbb Q)$, hence we find
maps $f_1,....., f_q :N_0 \to   E$,  such that the pull-backs of the
canonical generator $e$ of $H^1 (E,\mathbb Q)$ are exactly   $y_i=f_i ^{\ast}  (e)$.
The knowledge of the rational cohomology ring of $N_0$ now tells us
that the map $F=(f_1,...,f_q) :N_0 \to E^q$ induces an isomorphism on all
rational cohomology groups.   Since $E$ and $N_0$ are abelian spaces,
the map $F$ induces isomorphisms on all rational homotopy groups.
But all higher homotopy groups of $E$ vanish, which  proves the claim.

Now we consider the fiber bundle $N_0 \to L_0$ with fiber $G_{N_0}$.
Due to \lref{injectlemma}, the orbit map $G_{N_0} \to N_0$
induces an injection on $\pi_1$.  Since $\pi_1 (G_{N_0})$ and $\pi_1 (N_0)$ tensored with
$\mathbb Q$ are both isomorphic to $\mathbb Q ^l$,   the cokernel of the injection must be a torsion group.  Since $\pi_0 (G_{N_0})$ is finite, we deduce that    $\pi_1 (L_0)$ is
a torsion group.
Since all higher   homotopy groups of $N_0$ and $G_{N_0}$ are torsion groups, we deduce from the long exact sequence in homotopy that
all higher homotopy groups of $L_0$ are torsion as well.
Thus $L_0$ is a rationally contractible space.

  Note further that $\pi_1 (L_0)$ contains the abelian image of $\pi_1 (N_0)$
as a subgroup of finite index. Considering the same fiber bundle as above, $\pi_1(L_0)$ is finitely generated if and only if $\pi_1(N_0)$ is finitely generated, and this is in turn equivalent to any other leaf closure in $M$ having finitely generated fundamental group. Assuming this in addition, $\pi_1 (L_0)$ must be a finite group.     The principal leaf $L$ of $\mathcal F$ near $L_0$  admits a canonical covering map  to $L_0$. Since this covering must be finite, due to the finiteness of $\pi_1 (L_0)$, we see that $L$ is closed as well. Then all leaves are closed.
Since $L$ finitely covers $L_0$ and any other leaf is finitely covered by $L$
all leaves are rationally contractible.

Once we know that $\mathcal F$ is closed, the quotient $B= M/\mathcal F$
is a Riemannian orbifold. Moreover, the Borel construction $W_G$ is
Haefliger's classifying space of $B$, in particular, its rational
cohomology groups are non-zero only in finitely many degrees, since they
coincide with the rational cohomology groups of $B$. As we already know
that $H^\ast(W_G,\mathbb Q)$ is a polynomial ring, this tells us that the
rational cohomology groups of $W_G$ vanish in positive degrees. Since
$W_G$ is simply connected, $W_G$ is a rationally contractible space. From
the long exact homotopy sequence of the fibration $\hat M _G \to W_G$, we
conclude that~$N$ is rationally contractible as well.

Finally, $B$ is simply connected and its rational cohomology groups
vanish in positive degrees, since they coincide with the rational
cohomology groups of $W_G$. Therefore $B$ is rationally contractible.

\vskip .5cm

\section{Finiteness conditions} \label{secfinite}
We continue to use notations introduced in Section \ref{secmolino}.
\subsection{Finite homology:} \label{finitehom}
{\it Proof of \cref{secondprop}. }
Using \tref{mainthm} we see that there is a closed rationally contractible leaf $L$. Hence its universal covering is rationally
contractible as well.

The manifold $N_0$ appearing in the proof of \tref{mainthm}
is the fiber of the fibration $\hat M_G \to W_G$. Since $\hat M_G$ is contractible, $N$ is homotopy
equivalent to the loop space $\Omega W_G$. In particular, $N_0$ is an $H$-space, hence so is its universal covering $\tilde N_0$.

 We have the fibration $N_0\to L$ with fiber $G_{N_0}$.  In the course of the proof of \tref{mainthm}, we have seen
 that the induced map $\pi_1 (G_{N_0}) \to \pi_1 (N_0)$ is injective.
 Therefore, the  fiber of the fibration  $\tilde N_0 \to \tilde L$
 between the universal coverings is the universal covering of the connected component of $G_{N_0}$.  Since the connected component of
 $G_{N_0}$ is a torus, we deduce that $\tilde L$ is homotopy equivalent to $\tilde N_0$.  Thus $\tilde L$ is an $H$-space.

 We conclude from \pref{hopfspace} that the
homology groups of $\tilde L$ with coefficients in any field $K$ are finite
dimensional $K$-vector spaces.

If the homology of $\tilde L$ with integral coefficients is finitely generated, then a
theorem of Browder (\cite{Browder0}, see \cref{browdercon} below) tells
us that the rationally contractible $H$-space~$ \tilde L$ must be contractible.   Then $L$ is the classifying space of its fundamental group, which is torsion.  Thus the fundamental group of $L$ is trivial and $L$ is contractible.
From \pref{contrleaf}  we get that  $\mathcal F$ is a simple foliation given by a Riemannian
submersion $M\to B$.

Since the fiber $L$ and the total space $M$ are contractible, the base
$B$ is contractible as well, due to the exact sequence in homotopy.
\qed

\subsection{Small dimension:} {\it Proof of \cref{smalldimcodim}. }
Assume first that the quotient $B= M/\bar {\mathcal F}$ is compact.
Since $B= W/G$, the space $W$ must be compact as well.
Hence $W$ has finitely generated integral homology.
Applying \cite{HatcherSpSeq}, Lemma 1.9, we see that the space $W_G$
has finitely generated integral homology in each dimension. Therefore, $W_G$
has finitely generated homotopy groups in each dimension.  From the fibration
$ M_G\to W_G$ we deduce that $\pi_1 (N)$ is finitely generated. Then the fundamental group of a closed leaf $L_0$ is finitely generated as well. Using \tref{mainthm},
we obtain that  $\mathcal F$ is closed. But then $B$ is a simply connected compact orbifold.
Hence its rational homology does not vanish in the maximal dimension.
Since $B$ is rationally contractible, this implies that $\dim (B)=0$. Hence $\mathcal F$
has only one leaf.

Assume now that the dimension of the leaves of $\mathcal F$ is at most
$3$, and let $L$ be a closed rationally contractible leaf.
 We claim that $L$ is contractible. If $\dim (L) \leq 2$,
this is a direct consequence of the classification of one and
two-dimensional manifolds. Assume $\dim (L)= 3$. Then the universal
covering $\tilde L$ is still rationally contractible. In particular,
$\tilde L$ is non-closed, hence $H_3 (\tilde L)=0$. By assumption,
$\pi_1 (\tilde L) =0$. From Poincar\'e duality we know that
$H_2 (\tilde L)$ is torsion-free. Since
$\pi_2 (\tilde L)=H_2 (\tilde L) $, and since $\tilde L$ is rationally
contractible, we obtain $H_2 (\tilde L)=0$. Therefore, $\tilde L$ is
contractible. Thus $L$ is the classifying space of its fundamental group
$\pi _1 (L)$, which is a torsion group. We conclude that $L=\tilde L$.
Now the simplicity of the foliation follows  from \pref{contrleaf}.

Let us assume now that the codimension of the leaves is at most $2$.
Assume first that the foliation $\mathcal F$ is closed. The quotient
$B=M/\mathcal F$ is an orbifold with trivial orbifold fundamental group
(which is just the fundamental group of $W_G$). The classification of one and
two-dimensional orbifolds (cf.~\cite{kleinerlott}, Section~2.3) shows
that either $B$ has no singularities and is diffeomorphic to the
Euclidean space, or its underlying topological space is the two sphere.
But the second case cannot occur, since we already know that $B$ cannot be compact.
 Thus $B$ has no singularities. Hence
$\mathcal F$ has no exceptional leaves and $\mathcal F$ is simple.

Assume now that $\mathcal F$ is non-closed.  Then $M/\bar {\mathcal F}$ has dimension
less than two.  Due to \lref{haefligerlem},  $\mathcal F$ cannot have dense leaves
in $M$, unless $\mathcal F$ has only one leaf, hence we may assume that
$\mathcal F$ has codimension $2$ and $\bar {\mathcal F}$ has codimension $1$.
The closed leaves of $\mathcal F$ are exactly the singular leaves of $\bar {\mathcal F}$.
Since at least one such leaf exists, the quotient $M/\bar {\mathcal F}$ is either an interval
or a ray. If the quotient is a ray, then $M$ retracts to the closed leaf of $\mathcal F$,
which has an infinitely generated fundamental group by \tref{mainthm}. This is impossible, since $M$ is simply connected. Hence the quotient $B$ is a compact interval.
But $B$ cannot be compact, unless it is a point.

\qed

\subsection{Non-negative curvature:} {\it Proof of \tref{firstthm}.}
Due to \pref{contrleaf},  it suffices to find a closed contractible leaf.  Following  the ideas
of \cite{gromollEuc1} and  \cite{Bol}, Section 2.1, we are going to find a \emph{totally convex} leaf.
Recall that a subset $X'$ of $M$  is called totally convex, if it contains any  (not necessarily minimal) geodesic connecting any pair of its points.   A closed totally convex subset
is a topological manifold with boundary, whose set of inner points is a totally geodesic submanifold.     If a closed totally convex subset $X$ is a manifold (without boundary)
then $X$ is homotopy equivalent to $M$.

Assume now that we can find a closed totally convex $\mathcal F$-saturated submanifold $M'$ of  $M$, such that the restriction of $\mathcal F$ to $M'$ is cobounded.  Then $M'$ is contractible and by \cref{smalldimcodim}, $M'$ must be a leaf of $\mathcal F$. Hence,  \tref{firstthm} follows directly from  the following general geometric observation, whose proof, essentially contained in \cite{Bol}, we shortly recall below:

\begin{lem} \label{newgeom}
Let $M$ be a (not necessarily contractible) complete Riemannian manifold of non-negative curvature with a Riemannian foliation $\mathcal F$.  Then there exists a closed, totally convex $\mathcal F$-saturated submanifold $M'$ of $M$ such that the restriction  of $\mathcal F$ to  $M'$ is cobounded.
\end{lem}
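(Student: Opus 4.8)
The plan is to mimic the soul construction of Cheeger--Gromoll relative to the foliation $\mathcal F$, following Gromoll--Walschap and the argument of \cite{Bol}. The key point is that convexity-type arguments survive when one works with $\mathcal F$-saturated sets, because a Riemannian foliation makes the leaves equidistant, so the distance function to a saturated set is again constant on leaves and its sublevel sets are again saturated. First I would fix a leaf $L_0$ and consider the Busemann-type construction: for each ray $\gamma$ in $M$ emanating (say) from a point of $L_0$, the associated Busemann function $b_\gamma$ is convex since $M$ has non-negative curvature, and I would replace it by the function obtained by taking the infimum of $b_\gamma$ over the leaf through the base point, or equivalently descend to the (possibly singular) quotient $M/\bar{\mathcal F}$. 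The superlevel sets $C_t=\{b_\gamma\ge t\}$ are totally convex, closed, and — because $b_\gamma$ can be chosen constant on leaves — $\mathcal F$-saturated; intersecting over all rays gives a totally convex, closed, $\mathcal F$-saturated set $C$.

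The next step is the standard induction on dimension. If $C$ has nonempty boundary, one flows inward: the function $x\mapsto d(x,\partial C)$ is concave on $C$, its maximum set is again totally convex, closed, and (by equidistance of leaves) $\mathcal F$-saturated, and of strictly smaller dimension. Iterating, I reach a totally convex, closed, $\mathcal F$-saturated submanifold $M'$ without boundary; as recalled in the paragraph preceding the lemma, such an $M'$ is a totally geodesic submanifold and is homotopy equivalent to $M$. It remains to check that the restriction $\mathcal F|_{M'}$ is cobounded, i.e.\ $M'$ lies in a tube of finite radius around one leaf. This is where the choice of rays matters: if $\mathcal F|_{M'}$ were \emph{not} cobounded, then $M'/\bar{\mathcal F}$ would be a non-compact non-negatively curved Alexandrov space (the quotient of the totally geodesic $M'$ by the induced singular Riemannian foliation), hence would itself contain a ray; lifting this ray horizontally to $M'$ and then to $M$ produces a ray along which the Busemann function is non-constant, contradicting that $M'$ was obtained by intersecting \emph{all} such superlevel sets. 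So the process cannot have terminated, and the actual $M'$ we produced must already be cobounded.

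The main obstacle is the bookkeeping needed to guarantee that every set produced in the construction is simultaneously totally convex \emph{and} $\mathcal F$-saturated, while still keeping enough rays in play to force coboundedness at the end. The cleanest way is to carry out the whole Cheeger--Gromoll/Sharafutdinov machinery downstairs, on the metric quotient $M/\bar{\mathcal F}$, which is a complete non-negatively curved Alexandrov space; its soul $\Sigma$ is compact, the preimage of a neighborhood of $\Sigma$ in $M$ is $\mathcal F$-saturated, and pulling back the convex exhaustion and the concave distance-to-boundary functions along the submersion-with-singularities $M\to M/\bar{\mathcal F}$ yields saturated totally convex sets, the last one being $M'=$ preimage of $\Sigma$, on which $\mathcal F$ is cobounded precisely because $\Sigma$ is compact. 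I would present the argument in this quotient form, citing \cite{gromollEuc1} and \cite{Bol} for the analytic facts about convex functions and the soul construction, and only spell out the two places where the foliation interacts with them: equidistance of leaves (saturation of sublevel sets) and the fact that horizontal geodesics project to geodesics in the quotient (total convexity descends and lifts).
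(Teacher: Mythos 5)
Your overall strategy -- a Cheeger--Gromoll soul construction carried out so that every set in sight is both totally convex and $\mathcal F$-saturated, followed by the inward flow from the boundary -- is exactly the idea of the paper's proof (which follows \cite{Bol}). However, both of your concrete implementations have a genuine gap at the crucial point, namely producing functions that are \emph{simultaneously} $\mathcal F$-basic and concave along all geodesics of $M$. A Busemann function of a single ray is not basic, and neither of your fixes repairs this: taking an infimum of $b_\gamma$ over the leaf through the base point does not yield a basic function in general, and the ``do everything downstairs'' version fails because a non-horizontal geodesic of $M$ does not project to a geodesic of $M/\bar{\mathcal F}$, so the preimage of a totally convex subset of the quotient need not be totally convex in $M$, and the pullback of a concave function on the quotient need not be concave along $M$-geodesics. (Saturation of \emph{sublevel sets of distance functions to saturated sets} is fine, but that is not what your Busemann construction produces.) The paper's way around this is to use the normalized distance functions $f_i(y)=d(y,L_i)-l_i$ to leaves $L_i$: these are automatically basic because leaves of a Riemannian foliation are equidistant, and a pointwise limit along a subsequence is concave on $M$ by Toponogov because the $L_i$ run to infinity -- so concavity is established upstairs, with no descent or lift of convexity needed.

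The second gap is in your final coboundedness step, which as written is circular: the ray you obtain by lifting from $M'/\bar{\mathcal F}$ emanates from an arbitrary point of $M'$ and is not one of the rays whose Busemann superlevel sets you intersected at the start, so it does not directly contradict your construction. The paper avoids this by an extremal-choice argument instead of an explicit exhaustion: it takes the family $\mathcal C$ of \emph{all} closed totally convex saturated subsets (nonempty, closed under intersection), picks $X\in\mathcal C$ of minimal dimension and then the smallest element of $\mathcal C$ containing a fixed leaf $L$, and shows that if $\mathcal F|_X$ were not cobounded, the limit function $f$ above would be basic, concave, vanishing on $L$ and non-constant on $X$, so that $f^{-1}([0,\infty))\cap X$ is a strictly smaller member of $\mathcal C$ containing $L$ -- a contradiction; only afterwards does the distance-to-boundary step (which needs coboundedness to guarantee the maximum is attained) reduce to a boundaryless $M'$. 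I would recommend restructuring your argument along these lines: replace the ray/Busemann bookkeeping by distance functions to leaves and a minimality argument.
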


\begin{proof}
  Let $\mathcal C$ be the set of all closed, totally  convex, $\mathcal F$-saturated subsets of $M$.  The set $\mathcal C$ is non-empty, since it contains $M$,
and it  is closed under  intersections.   Let us fix some set $X \in \mathcal C$ of smallest dimension in
$\mathcal C$.  Fix some $x\in X$ and let $L$ be the leaf through $x$. We may assume that $X$ is the smallest element of $\mathcal C$ which contains $L$.

  We claim that $X$ is a  submanifold and that $\mathcal F$
is  cobounded  on $X$. First, assume  that $\mathcal F$ is   not cobounded.  Hence we find   leaves
$L_i \subset X$ running away from $x$, i.e., such that $l_i=d(x,L_i) $ converges to infinity.  Consider the normalized distance  functions $f_i:M\to \mathbb R$ to the subset $L_i$  given by $f_i (y)=d(y,L_i)-l_i$.   The functions  $f_i$ vanish at $x$, are $1$-Lipschitz and $\mathcal F$-basic, i.e., constant on leaves of $\mathcal F$.  Consider a pointwise limit $f$ of a subsequence of $f_i$.
  Since the sets $L_i$ run to infinity, the function $f$ is concave by Toponogov's theorem
(see the usual  proof of the soul theorem).
Moreover, $f$ is non-constant on $X$ since it has velocity $1$ on a ray starting at $x$ and being a limit of shortest geodesics from $x$ to $\bar L_i$.  Hence the    superlevel set
$f^{-1} ([0, \infty ))\cap X$   is a closed totally convex subset of $M$, which is $\mathcal F$-saturated, contains $L$  and is properly contained in $X$. This contradicts the minimality of $X$ and thus we may assume that  $\mathcal F$ is cobounded on $X$. 

Assume now  that $X$ is a manifold with non-empty boundary $\partial X$.
The boundary $\partial X$ must be $\mathcal F$-saturated as well, as we  immediately see in the local picture (see \cite{Bol}, for a slightly more difficult argument in the case of singular foliations).    The  distance function $d_{\partial X}$ to the boundary
is a concave function on $X$  (see the usual soul construction), hence its superlevel sets are closed totally convex   subsets of $X$.   Now, by the coboundedness, the distance function $d_{\partial X}$ assumes a maximum on $X$. And the set $M'$ of points where this maximum is attained is a
convex set of dimension less than the dimension of $X$, contradicting to the choice of $X$.
\end{proof}

\begin{rem}
In connection with Question \ref{newgeomquest}, we mention that as in \cite{Bol}, the above proof applies without changes to singular Riemannian foliations (and more generally transnormal decompositions)  on non-negatively curved manifolds.
\end{rem}

\subsection{The Euclidean case} \label{Euclideancase} Here we present a
short geometric proof of \tref{firstthm} for the  Euclidean space.

Namely, consider the closed singular Riemannian foliation
$\bar {\mathcal F}$ on $M = \mathbb R^n$. Due to \cite{Bol},
Theorem 2.1
(cf.~\cite{conjugate}, Corollary 1.3 for a simplification of the
argument, starting from \lref{newgeom} above), there is at least one leaf $\bar L$ of $\bar {\mathcal F}$
which is an affine subspace of $\mathbb R^n$.
Since an affine subspace is
contractible,  this leaf closure $\bar L$ must be in fact a leaf $L=\bar L$
of~$\mathcal F$ due to  \lref{haefligerlem}.  The result now follows from \pref{contrleaf}.

\vskip .5cm
\vskip .5cm

\appendix

\section{Preliminaries on $H$-spaces} \label{secprel}

We refer to \cite{Hatcher}, pp.~281-292, for this section, in which we
assume that all spaces have the homotopy type of a CW complex. Recall
that an \emph{$H$-space} is a topological space together with a
"multiplication" $\mu:X\times X\to X$ that has a "canonical unit
element". Any $H$-space is an abelian topological space (\cite{Hatcher},
Example 4A.3) and its universal covering is again an $H$-space.
In our applications, the space $X$ will be the loop space~$\Omega Z$ of
some other space $Z$, equipped with the multiplication given by
concatenation of loops.

For a ring $R$, a \emph{Hopf algebra} over $R$ is a graded $R$-algebra
$A$ (always associative and graded-commutative) together with a graded
homomorphism $\mu:A\to A\otimes _R A$, called the comultiplication, that
satisfies an algebraic equivalent of the unity axiom for $H$-spaces; see
\cite{Hatcher}, p.~283. In the sequel, all Hopf algebras will be
\emph{connected}, i.e., satisfy $A_0 =R$.

The structure of finite-dimensional Hopf algebras over fields is
essentially known due to the following theorem proved by Hopf in
characteristic $0$ and by Borel in positive characteristics
(\cite{Hatcher}, p.~285 or \cite{Milnor}, Theorem 7.11):

\begin{thm} \label{hopfborel}
Let $A$ be a finite-dimensional Hopf algebra over a perfect field $K$.
Then~$A$ is a tensor product of Hopf algebras $A_i$ over $K$, each
generated by one element. Moreover, if $K$ has characteristic $0$, each
$A_i$ has dimension $2$, and $A$ is isomorphic to the cohomology ring
with coefficients in $K$ of a product of odd-dimensional spheres.
\end{thm}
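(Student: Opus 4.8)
The plan is to reconstruct Borel's proof of this structure theorem, proceeding by induction on $\dim_K A$ (which is finite, so $A$ vanishes above some degree). If $A = K$ there is nothing to prove, so assume $A \neq K$ and let $n$ be the least positive degree with $A_n \neq 0$; pick $0 \neq x \in A_n$. Since the reduced comultiplication $\mu(x) - x \otimes 1 - 1 \otimes x$ lies in $\bigoplus_{0 < i < n} A_i \otimes A_{n-i} = 0$, the element $x$ is primitive. Let $B \subseteq A$ be the graded subalgebra generated by $x$; since $\deg x > 0$ and $A$ is finite-dimensional, $x$ is nilpotent, so $B$ is either the exterior algebra $\Lambda(x)$ (this happens when $n$ is odd and $\operatorname{char} K \neq 2$, where graded-commutativity already forces $x^2 = 0$) or a truncated polynomial algebra $K[x]/(x^s)$. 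In the latter case, comparing $0 = \mu(x^s)$ with $(x \otimes 1 + 1 \otimes x)^s = \sum_i \binom{s}{i}\, x^i \otimes x^{s-i}$ and using Kummer's theorem on binomial coefficients forces $s$ to be a power of $p = \operatorname{char} K$. In every case primitivity of $x$ gives $\mu(B) \subseteq B \otimes B$, so $B$ is a sub-Hopf-algebra, and $\dim_K B \geq 2$.

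The core step is the classical lemma of Borel (and of Milnor--Moore): $A$ is a free module over any connected graded sub-Hopf-algebra $B$, and the quotient $C := A /\!/ B = A \otimes_B K$, which inherits a connected graded Hopf algebra structure, lifts back to a graded-algebra isomorphism $A \cong B \otimes C$. Freeness is proved by lifting a homogeneous $K$-basis of $C$ to $A$ and checking degree by degree that the lifts form a free $B$-basis; the multiplicative splitting is then obtained by lifting a homogeneous set of algebra generators of $C$ to $A$, degree by degree, and correcting each lift so that the monogenic relation it satisfies in $C$ is promoted to the same relation in $A$. The hypothesis that $K$ is perfect is genuinely needed exactly for this correction in positive characteristic: a lift $z$ of a generator of $C$ with $z^{p^k} \in B$ must be replaced by $z - c$ where $c^{p^k}$ equals that element of $B$, and such a $c$ exists because Frobenius is surjective on a perfect field. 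Granting this lemma, $\dim_K C = \dim_K A / \dim_K B \leq \tfrac12 \dim_K A < \dim_K A$, so by the inductive hypothesis $C$ is a tensor product of monogenic Hopf algebras, and therefore so is $A \cong B \otimes C$. This proves the first assertion.

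For the characteristic $0$ refinement, note that every field of characteristic $0$ is perfect, so the first part applies and exhibits $A$ as a tensor product of monogenic Hopf algebras. In characteristic $0$ each such factor is of the form $\Lambda(x)$ with $\deg x$ odd (dimension $2$) or $K[x]$ with $\deg x$ even; the second kind is infinite-dimensional and hence cannot occur when $A$ is finite-dimensional. Thus every factor is an exterior algebra $\Lambda(y_i)$ on a generator of odd degree $e_i := \deg y_i$, so $A \cong \Lambda(y_1, \dots, y_q)$. Finally, by the K\"unneth formula $H^{\ast}\!\left(\prod_{i=1}^q S^{e_i}; K\right) \cong \Lambda(y_1, \dots, y_q)$ with the $i$-th generator in degree $e_i$, which identifies $A$ with the cohomology ring with coefficients in $K$ of a product of odd-dimensional spheres.

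The main obstacle is the Borel freeness-and-splitting lemma used in the second paragraph: module-freeness of $A$ over a sub-Hopf-algebra is already a nontrivial homological fact, and upgrading it to a multiplicative isomorphism $A \cong B \otimes (A /\!/ B)$ requires the careful, characteristic-sensitive choice of lifts of generators, which is precisely where the perfectness hypothesis enters. Everything else is a routine induction, and in the characteristic $0$ case relevant to the applications in this paper the splitting step trivializes, so the argument there reduces to a clean degree-by-degree induction.
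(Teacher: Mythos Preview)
The paper does not give its own proof of this theorem: it is stated as a classical result and attributed to Hopf (characteristic~$0$) and Borel (positive characteristic), with references to \cite{Hatcher}, p.~285 and \cite{Milnor}, Theorem~7.11. So there is no in-paper argument to compare against.

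Your reconstruction is exactly the classical Borel argument as presented in those references: pick a primitive element of minimal positive degree, let $B$ be the monogenic sub-Hopf-algebra it generates, invoke the Borel/Milnor--Moore freeness-and-splitting lemma to write $A \cong B \otimes (A/\!/B)$ as graded algebras, and induct on $\dim_K A$. Your identification of the splitting lemma as the nontrivial core, and of the perfectness hypothesis as entering precisely in the positive-characteristic lifting step (extracting $p$-th roots), is accurate. One small point worth making explicit: the isomorphism $A \cong B \otimes C$ produced by the lemma is only an isomorphism of graded \emph{algebras}, not of Hopf algebras; the theorem's conclusion should be read accordingly (each tensor factor is individually a Hopf algebra, but the decomposition need not respect the coalgebra structure of $A$). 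With that caveat your outline is correct and matches the standard proof the paper is citing.
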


As a consequence we have:
\begin{cor} \label{dimensionbound}
Let $A$ be a finite-dimensional Hopf algebra over a perfect field $K$.
Then the dimension of $A$ is bounded from above by $2^d$, where $d$ is
the maximal degree in which the graded algebra $A$ is not $0$.
\end{cor}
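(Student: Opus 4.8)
The plan is to read the bound off directly from the structure theorem \tref{hopfborel}; that theorem (which is also where the hypothesis that $K$ is perfect enters) is the only substantial input, and what remains is bookkeeping. By \tref{hopfborel} we may write $A\cong\bigotimes_i A_i$, a tensor product of connected Hopf algebras over $K$, each $A_i$ generated by a single homogeneous element $x_i$ of some degree $e_i$; connectedness forces $A_0=K$, hence $e_i\geq 1$. Since $A$ is finite-dimensional, only finitely many of the factors are nontrivial and each such factor is finite-dimensional. A finite-dimensional connected Hopf algebra over a perfect field generated by one element is then of one of two shapes: an exterior algebra $\Lambda(x_i)=K[x_i]/(x_i^2)$, so that $\dim A_i=2$ and $A_i$ lives in degrees $0$ and $e_i$; or, when $K$ has positive characteristic $p$, a truncated polynomial algebra $K[x_i]/(x_i^{p^{s_i}})$ with $s_i\geq 1$, so that $\dim A_i=p^{s_i}$ and the top nonzero degree of $A_i$ equals $(p^{s_i}-1)e_i$. (An untruncated polynomial algebra $K[x_i]$ is excluded since it is infinite-dimensional; for $p=2$ the truncated case may involve an odd-degree generator, which affects nothing below.)

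Next I would invoke multiplicativity over the field $K$: one has $\dim A=\prod_i\dim A_i$, and the top nonzero degree of a tensor product of nonnegatively graded $K$-algebras is the sum of the top nonzero degrees of the factors, because the top graded piece of the tensor product is the tensor product of the top graded pieces, hence nonzero. Writing $d_i$ for the top nonzero degree of $A_i$, so that $d=\sum_i d_i$, it therefore suffices to establish the factorwise inequality $\dim A_i\leq 2^{d_i}$ and then multiply.

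That factorwise inequality is elementary. For an exterior factor it reads $2\leq 2^{e_i}$, which holds because $e_i\geq 1$. For a truncated polynomial factor, put $m:=p^{s_i}\geq 2$; since $e_i\geq 1$ it is enough to check $m\leq 2^{m-1}$ for every integer $m\geq 2$, which follows by an immediate induction ($2^m=2\cdot 2^{m-1}\geq 2m\geq m+1$). Multiplying over $i$ gives $\dim A=\prod_i\dim A_i\leq\prod_i 2^{d_i}=2^{\sum_i d_i}=2^d$, and the degenerate case $d=0$ (where $A=K$, so $\dim A=1=2^0$) is trivial. I do not anticipate a real obstacle: once \tref{hopfborel} is available the corollary is a short multiplicativity-and-arithmetic argument, whose only genuine arithmetic content is the bound $m\leq 2^{m-1}$.
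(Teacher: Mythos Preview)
Your proof is correct and follows essentially the same approach as the paper's: the paper's proof simply states that the bound ``clearly holds for algebras generated by one element'' and ``is preserved under tensor products,'' then invokes \tref{hopfborel}, whereas you spell out both of these steps explicitly (the case analysis exterior vs.\ truncated polynomial, the arithmetic $m\leq 2^{m-1}$, and the additivity of top degree under tensor product). No substantive difference.
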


\begin{proof}
The result clearly holds for algebras generated by one element. Since the
bound is preserved under tensor products, the statement follows from the
above theorem of Borel and Hopf.
\end{proof}

If $X$ is an $H$-space with multiplication $\mu$ and if the cohomology
ring $A= H^{\ast} (X, K)$ with coefficients in a field $K$ is
finite-dimensional, then $\mu$ induces a comultiplication on~$A$ that
makes it into a Hopf algebra. Using this, the above theorem of Borel and
Hopf and the Bockstein sequence, it is not difficult to get the following
result of Browder (Corollary 7.2 in \cite{Browder0}):

\begin{cor} \label{browdercon}
Let $X$ be a connected $H$-space, whose integer homology $H_{\ast} (X,
\mathbb Z)$ is finitely generated. Then $H_d (X, \mathbb Z) =\mathbb Z$,
where $d$ is the maximal degree, in which the homology does not vanish.
In particular, if $X$ is rationally contractible, it is in fact
contractible.
\end{cor}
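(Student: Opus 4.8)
The plan is to bootstrap from the structure theory of finite-dimensional Hopf algebras in \tref{hopfborel} and \cref{dimensionbound}, upgrading the rational statement to an integral one via the Bockstein spectral sequence. First I would fix the $H$-space $X$ with finitely generated integral homology, and let $d$ be the top degree in which $H_{\ast}(X,\mathbb{Z})\neq 0$; such a $d$ exists because, by the universal coefficient theorem, $H^{\ast}(X,\mathbb{Q})$ is finite-dimensional, so \cref{dimensionbound} applied to the Hopf algebra $H^{\ast}(X,\mathbb{Q})$ already bounds the range of nonzero rational cohomology, and finite generation over $\mathbb{Z}$ then bounds the range of nonzero integral homology. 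The goal is to show $H_d(X,\mathbb{Z})\cong\mathbb{Z}$, i.e. that the top homology is free of rank exactly one.

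The key step is to control the torsion. For each prime $p$, the mod-$p$ cohomology $H^{\ast}(X,\mathbb{Z}/p)$ is a finite-dimensional Hopf algebra over the perfect field $\mathbb{Z}/p$, so by \tref{hopfborel} it is a tensor product of monogenic Hopf algebras, each of the form $\mathbb{Z}/p[x]/(x^{p^k})$ or an exterior algebra on an odd-degree generator. One then runs the Bockstein spectral sequence $(E_r,d_r)$, whose first page is $H^{\ast}(X,\mathbb{Z}/p)$ and whose $E_\infty$ page is (the mod-$p$ reduction of) the free part of $H^{\ast}(X,\mathbb{Z})$. The Hopf-algebra structure is compatible with the Bockstein differentials (they are derivations and the pages remain Hopf algebras over $\mathbb{Z}/p$), so each $E_r$ is again a tensor product of monogenic Hopf algebras; in particular the differentials pair up generators, and the top nonzero degree of $E_\infty$ equals the top degree of a tensor product of exterior algebras on odd generators. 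Comparing with the rational computation $H^{\ast}(X,\mathbb{Q})=\Lambda(y_1,\dots,y_q)$ coming from \tref{hopfborel} in characteristic $0$, the surviving generators rationally and mod $p$ must match in number and (parity of) degree, and the top class is the product of all of them, which is one-dimensional in the top degree. This forces $H_d(X,\mathbb{Z})$ to be free of rank one and shows all torsion lies in strictly lower degrees; feeding the rational conclusion back in, if $X$ is rationally contractible then $q=0$, so there are no generators at all, hence $d=0$ and $X$ is contractible.

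The main obstacle I expect is the careful bookkeeping in the Bockstein spectral sequence: verifying that each page is a connected Hopf algebra over $\mathbb{Z}/p$ so that \tref{hopfborel} applies page by page, that the differentials respect the tensor decomposition (so that one can track exactly which generators die and which survive), and that the $E_\infty$-page correctly computes $(\text{free part of } H^{\ast}(X,\mathbb{Z}))\otimes\mathbb{Z}/p$ in the top degree. A clean way to finesse this is to invoke the rational result first — \tref{hopfborel} gives $H^{\ast}(X,\mathbb{Q})$ a product-of-spheres cohomology ring, hence by Poincaré-duality-type reasoning for such rings the top rational class is one-dimensional — and then argue that finitely generated integral homology with no rational homology above degree $d$ plus the Hopf structure leaves no room for torsion in degree $d$, using that a finite-dimensional Hopf algebra over $\mathbb{Z}/p$ of top degree $d$ has one-dimensional top piece as well. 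The rationally-contractible corollary is then immediate, since $H^{\ast}(X,\mathbb{Q})$ being trivial above degree $0$ forces the product-of-spheres to be empty, so $d=0$ and $X$ is weakly contractible, hence (being a CW complex) contractible.
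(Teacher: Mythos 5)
The paper does not actually prove this corollary: it cites Browder (\cite{Browder0}, Corollary 7.2) and merely indicates that the argument combines \tref{hopfborel} with the Bockstein sequence. Your proposal follows exactly that indicated route, so you are on the intended track; the question is whether your sketch fills in what the citation hides, and it does not. The crux of Browder's theorem is to rule out $p$-torsion in the top integral degree $d$ and, more generally, torsion in degrees above the rational top degree $D_0=e_1+\cdots+e_q$. If such torsion existed, then by the universal coefficient theorem $H^{\ast}(X,\mathbb F_p)$ would be nonzero in some degree $D_p>D_0$, and its (one-dimensional, by Borel) top class would have to become a boundary at some page of the Bockstein spectral sequence, since $E_\infty$ is the mod-$p$ reduction of the free part and vanishes above $D_0$. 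Your key assertions --- that ``the differentials pair up generators,'' that the surviving generators ``must match in number and (parity of) degree'' with the rational ones, and the fallback claim that finite generation plus the Hopf structure ``leaves no room for torsion in degree $d$'' --- are precisely restatements of the conclusion to be proved. They do not follow formally from the facts you cite: knowing that each page $E_r$ is a tensor product of monogenic Hopf algebras and that $d_r$ is a derivation does not by itself exclude, say, $d_r(y)=x^{p^s}$ with $x^{p^s}$ spanning the top degree, which would make the top class die and the top degree drop. Excluding this requires Browder's analysis of finite-dimensional differential Hopf algebras (using that $d_r$ is also compatible with the coproduct, the behavior of primitives and $p$-th powers, and the inductive ``implication'' machinery), which is the actual content of the theorem and is absent from your sketch.

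A secondary point: your opening justification for the existence of $d$ is circular unless ``finitely generated'' means that the total graded group $\bigoplus_n H_n(X,\mathbb Z)$ is finitely generated (in which case the existence of $d$ is trivial). Degreewise finite generation together with bounded rational cohomology does not bound the range of integral homology --- $K(\mathbb Z/p,n)$ is an $H$-space with degreewise finitely generated homology, vanishing rational homology, and torsion in infinitely many degrees --- so the strong finiteness hypothesis is essential and cannot be deduced the way you suggest.
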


The $H$-spaces $X$ arising in the proof of \tref{mainthm}, a priori, do
not satisfy the assumption of the last corollary. Instead, their
cohomology is the cohomology of non-compact finite dimensional manifolds,
in particular, their cohomology groups vanish in all but finitely many
degrees.

We are going to prove \pref{hopfspace} now, showing that in this more
general situation the cohomologies with coefficients in a field do not
show anomalies and behave like in the finite case.

\begin{proof} [Proof of \pref{hopfspace}]
By the universal coefficient theorem, it suffices to prove the result for
$K=\mathbb Q$ and $K=\mathbb F_p$, for all prime numbers~$p$. Thus let
$K$ be one of these fields and let us consider homologies and
cohomologies with coefficients in $K$ until the end of the proof. Hence
the field $K$ is a perfect field and we may use \tref{hopfborel} and
\cref{dimensionbound}.

We denote by $A$ the cohomology ring $H^{\ast} (X, K)$. Since any
finite-dimensional Hopf algebra contained in $A$ has dimension at most
$2^m$ by \cref{dimensionbound}, and since any Hopf algebra can be
exhausted by finite-dimensional subalgebras (cf.~\cite{Milnor}), we
conclude that any Hopf subalgebra of $A$ has finite dimension, bounded
above by $2^m$.

It remains to prove that $A$ is a Hopf algebra. Unfortunately, the
homology is not finitely generated, and the assumption that $X$ is an
$H$-space does not provide $A$ automatically with the structure of a Hopf
algebra. Instead we argue as follows: The map $\mu$ defines a
homomorphism $\mu ^{\ast} : H^{\ast} (X) \to H^{\ast} (X\times X)$. We
consider the K\"unneth formula with coefficients in a field (see
\cite{Hatcher}, Corollary 3B.7)
\begin{align*}
H^k (X\times X) &=\Hom_K(H_k(X\times X),K) \\
&= \oplus _{i=0} ^{k} {\mathrm{Hom}}_K (H_i (X) \otimes H_{k-i} (X) , K).
\end{align*}
The $i$-th summand in this decomposition contains
$H^i (X) \otimes H^{k-i} (X)$ as a subspace in a canonical way. Moreover,
the $i$-th summand coincides with this tensor product if (and only if)
one of the factors is finite-dimensional. In particular, this is always
the case for $i=0$ and $i=k$. With respect to this decomposition, for any
$\alpha \in H^k (X)$ we have
$$\mu ^{\ast}(\alpha )=1\otimes\alpha+\alpha\otimes 1+r(\alpha ),$$
where $r (\alpha )$, with respect to the above decomposition, has
non-zero coordinates only in summands with $0<i<k$ (\cite{Hatcher},
p.~283).

Let us assume that $A$ is not finite dimensional. Let $d$ be the smallest
integer such that $H^d (X)$ is not finite-dimensional. Consider the
subalgebra $C$ of $A$ that is generated by all elements of degree at most
$d$. For each $\alpha \in H^k (X)$, with $k\leq d$, we have
$r(\alpha) \in \oplus _{i=0} ^{k} {\mathrm{Hom}}_K (H_i (X) \otimes
H_{k-i} (X) , K)=\oplus _{i=1} ^{k-1} H^i (X) \otimes H^{k-i} (X)$,
by the assumption about finite-dimensionality of all $H^i (X)$, with
$i<d$.

In particular, $\mu^{\ast} (\alpha) $ is contained in the subalgebra
$C\otimes C \subset A\otimes A \subset H^{\ast} (X\times X)$. Since $\mu$
is a homomorphism, and $C$ is by assumption generated by such elements
$\alpha$ of degree at most $d$, we conclude that
$\mu^{\ast} (C) \subset C\otimes C$. Thus $C$ is a Hopf algebra. As we
have observed, $C$ is finite dimensional. This is in contradiction to our
assumption on $H^d (X)$.
\end{proof}

\vskip .5cm

\section{Equivariant cohomology and dimension} \label{sec:techprop}
We are going to prove \pref{prop:eqcohinjection} in this section.
First, we recall some notations and basics about equivariant cohomology.
We refer the reader to \cite{AlldayPuppe} for more details. In this
section all cohomologies are considered with $\mathbb Q$-coefficients.

For a compact Lie group $G$ let $BG$ denote the classifying space of $G$
with the classifying principal $G$-bundle $EG \to BG$, such that $EG$ is
contractible.

Let $X$ be a topological space with a $G$-action. The
\emph{Borel construction} of the $G$-action on $X$ is the space
$X_G := (X \times EG) /G$, where $G$ acts diagonally. The $G$-equivariant
cohomology of the space $X$ is by definition
$H^{\ast} _G (X):= H^{\ast} (X_G)$.

Let $T$ be a maximal torus of $G$ and let $N(T)$ be its normalizer in
$G$. Then we have a canonical isomorphism
$H_G ^{\ast} (X) = H_{N(T)} ^{\ast} (X)$ (cf.~\cite{AlldayPuppe},
p.~205-206). In fact, in \cite{AlldayPuppe} this result is explained only
in the case of a connected group $G$ (the only case we need), but the
same proof applies to disconnected groups as well.

We recall the notion of \emph{Krull dimension}, (cf.~\cite{Eisenbud},
Section 8) which we will only use in the case of commutative finitely
generated $\mathbb Q$-algebras. Namely, the Krull dimension of such an
algebra $A$ is the maximal number $s$ such that $A$ contains a polynomial
algebra on~$s$ variables (the usual definition is equivalent to this one
due to \emph{Noether's normalization lemma}). The Krull dimension does
not change if $A$ is replaced by a larger algebra $A'$, such that $A'$ is
a finite module over $A$. In particular, for a finite group~$\Gamma$
acting on $A$ by homomorphisms, the set of fixed points $A^{\Gamma}$ is a
finitely generated $\mathbb Q$-algebra of the same Krull dimension as $A$
(this is a theorem of Noether \cite{Noether}, cf.~\cite{Eisenbud},
Exercise 13.2 and Exercise 13.3).

Now, \pref{prop:eqcohinjection} is a direct consequence of the following
more general result, which is an analogue of Theorem 7.7 in
\cite{Quillen} for $p=0$.

\begin{prop} \label{lastprop}
Let $X$ be a smooth manifold with a smooth action of a compact Lie group
$G$. Assume that $H^{\ast} _G (X)$ is a finitely generated
$\mathbb Q$-algebra. If the Krull dimension of the even-dimensional part
of $H^{\ast} _G (X)$ is $q$, then there is a point $x\in X$ whose
stabilizer has rank $q$.
\end{prop}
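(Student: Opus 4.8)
The plan is to follow Quillen's strategy for his Theorem 7.7, adapted to $p=0$. First I would reduce to the case of a torus. Using the isomorphism $H_G^*(X) = H_{N(T)}^*(X)$ recalled above, and the fact that $N(T)/T$ is a finite group acting on $H_T^*(X)$ with $H_{N(T)}^*(X) = H_T^*(X)^{N(T)/T}$, the Noether argument quoted in the text shows that $H_T^*(X)$ is again a finitely generated $\mathbb{Q}$-algebra with the same Krull dimension $q$ (in even degrees). So it suffices to prove the statement for $G = T$ a torus: there is a point $x\in X$ whose $T$-stabilizer has rank $q$.

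Next I would use the localization theorem for torus actions. The key object is the set $X^{(q)}$ of points $x\in X$ whose stabilizer $T_x$ has rank at least $q$; equivalently, $x$ lies on $X^S$ for some subtorus $S\subseteq T$ of rank $q$. I want to show $X^{(q)}\neq\emptyset$, and since $\dim T < \infty$ the rank of a stabilizer is bounded, so the maximal such rank will actually be achieved and will equal $q$ once I also know (from the reverse inequality, which follows from the standard fact that $H_T^*(X^S)$ and hence $H_T^*(X)$ has Krull dimension at most the rank of $T_x$ for $x\in X^S$, combined with restriction) that no stabilizer has rank exceeding $q$. The heart of the matter is the inequality "Krull dimension $q$ $\Rightarrow$ some stabilizer of rank $\geq q$." For this I would argue by contradiction: if every stabilizer has rank $< q$, then $X$ is covered by the closed subsets $X^S$ as $S$ ranges over the (finitely many relevant, up to the combinatorics of the fixed-point manifolds) subtori of rank $q$, except that these are all empty — so $X = \bigcup_{\operatorname{rk} S = q-1, \, S \text{ among finitely many}} X^S$ in a suitable sense. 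Then a Mayer–Vietoris / spectral-sequence argument, exactly as in Quillen, bounds the Krull dimension of $H_T^*(X)$ by $\max_S (\text{Krull dim } H_T^*(X^S))$; since each such $X^S$ has all stabilizers (for the quotient torus $T/S$ action, which is what governs the "new" polynomial generators beyond $H^*(BS)$) of rank $0$ there, one gets Krull dimension $\leq \dim H^*(BS) = q-1$ after accounting carefully — contradiction. The honest version replaces the naive union by a stratification of $X$ by orbit type and an induction on the number of strata, using that $H_T^*$ of a single orbit type stratum $X_{(H)}$ has Krull dimension exactly $\operatorname{rk} H$.

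The cleanest route, which I would actually write, is: (1) reduce to $G=T$; (2) let $r$ be the maximal rank of a stabilizer, achieved on some closed $T$-invariant submanifold $X^S$ with $\operatorname{rk} S = r$; (3) show $H_T^*(X)$ has Krull dimension $\leq r$ by an induction over the orbit-type stratification using Mayer–Vietoris and the fact that for the open dense stratum $X_{(H)}$ one has $H_T^*(X_{(H)})$ a module over $H^*(B(T/H')) $-type algebras of Krull dimension $\operatorname{rk}(H) \le r$ (this is where finite generation and the structure of $X_{(H)}/T$ enter), and (4) show $H_T^*(X)$ has Krull dimension $\geq r$ by restricting to $X^S$ and using that $H_T^*(X^S)$ contains $H^*(BS) \otimes H^*((X^S)^{T/S}) \supseteq \mathbb{Q}[x_1,\dots,x_r]$ up to finite extension, together with the fact that restriction $H_T^*(X) \to H_T^*(X^S)$ has image of full Krull dimension by the localization theorem (the kernel and cokernel are torsion over $H^*(BT)$ localized away from the relevant primes). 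Combining (3) and (4), $r = q$, and the point $x \in X^S$ has stabilizer of rank $r = q$, which is what we want. The main obstacle I anticipate is step (3): controlling the Krull dimension under the stratification requires knowing that the relevant Mayer–Vietoris sequences and the compactly-supported/ordinary cohomology bookkeeping behave well for the possibly noncompact manifold $X$ with only finitely many orbit types on each compact piece — Quillen handles this with the "finistic space" hypothesis, and here it is supplied by $X$ being a smooth manifold with a smooth (hence locally linear, locally finite orbit type) $G$-action together with the standing assumption that $H_G^*(X)$ is finitely generated, which already rigidifies the situation.
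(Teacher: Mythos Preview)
Your reduction to the torus case is correct and matches the paper's argument. The genuine gap lies in what comes afterwards: your stratification/Mayer--Vietoris plan and your appeal to localization both implicitly assume that only \emph{finitely many} subtori of $T$ occur as identity components of isotropy groups. You write that the smooth action is ``locally linear, locally finite orbit type'' and that finite generation of $H_G^*(X)$ ``rigidifies the situation,'' but neither of these remarks proves global finiteness on a non-compact manifold. In Quillen's mod-$p$ setting this issue does not arise because there are only finitely many elementary abelian $p$-subgroups of each rank inside a compact Lie group; over $\mathbb Q$ the relevant subgroups are subtori, and a torus contains infinitely many subtori of every positive rank and corank. Without a bound on how many of them actually appear as isotropy components, your induction over orbit-type strata has infinitely many steps and the Mayer--Vietoris bookkeeping collapses.

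The paper fills exactly this gap with a separate lemma: using the finite-generation hypothesis, one first shows that $H^*(X;\mathbb Q)$ is finite-dimensional (via the Serre spectral sequence of the principal bundle $X\times EG\to X_G$ and a Serre-class argument), and then argues that if infinitely many distinct subtori $T_i$ occurred as isotropy components, the equivariant Thom classes of the corresponding fixed-point components $X_i$ would give infinitely many linearly independent elements in $H_G^k(X)$ for some $k\le\dim X$, contradicting finite generation. Once this finiteness is in hand, the paper simply invokes the known torus case from Allday--Puppe rather than reproving the stratification argument. So your outline is on the right track but is missing precisely the one nontrivial new ingredient: you must extract from the hypothesis on $H_G^*(X)$ that the set of connected isotropy types is finite, and you have not indicated how to do this.
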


\begin{proof}
We have seen that, by replacing $G$ by the normalizer of the maximal
torus, we may assume that the connected component $G^0$ of $G$ is a
torus. The algebra $H^{\ast} _G (X)$ is the set of fixed points of
$H^{\ast} _{G^0 } (X)$ under the canonical action of the finite group
$G/G^0 =\pi_0 (G)$. Hence, $H^{even} _{G^0} (X)$ is a finitely generated
$\mathbb Q$-algebra of Krull dimension~$q$. Thus, we may assume that $G$
is a torus.

For any fixed degree $\ast$, the space $H^{\ast} _G (X)$ is
finite-dimensional. From the principal fibration $X\times EG \to X_G$ we
deduce that in each degree~$\ast$, the cohomology
$H^{\ast} (X\times EG) =H^{\ast} (X)$ is finite dimensional
(\cite{HatcherSpSeq}, Lemma 1.9, for the Serre class $\mathcal C$ of
abelian groups, whose tensor product with $\mathbb Q$ has finite rank
over $\mathbb Q$). Since $X$ is a finite-dimensional space, the total
cohomology ring $H^{\ast} (X)$ is finite dimensional.

Thus, $G$ is a torus and $H^{\ast} (X)$ is finite-dimensional. In this
case the claim is well-known and appears for instance in
\cite{AlldayPuppe}, p.~257, under the additional assumption that the set
of connected components of isotropy groups of the action is finite. (Note
that the Krull dimension of $H^{even} _G (X)$ coincides with the Krull
dimension of $H^{\ast} _G(X)$ used in \cite{AlldayPuppe}, since
$H^{even} _G (X)$ is a finite algebra over the ring
$H^{\ast} (BG) /{\mathrm{ann}} (H^{\ast} _G (X)) $, which is used there
to define the dimension.) This assumption is verified due to
\lref{lastlemma} below.
\end{proof}

\begin{lem} \label{lastlemma}
Let $G$ be a torus and let $X$ be a smooth manifold with a smooth action
of $G$. Assume that $H^k _G (X)$ is finite-dimensional for each $k$. Then
the set of connected components of isotropy groups of the action of $G$
on $X$ is finite.
\end{lem}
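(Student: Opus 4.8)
Let $G$ be a torus acting smoothly on a smooth manifold $X$, and suppose $H^k_G(X)$ is finite-dimensional for each $k$. Then the set of connected components of isotropy groups of the action is finite.
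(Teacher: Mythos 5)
Your submission contains no proof at all: it simply restates the lemma. There is no argument to evaluate, so as it stands there is a complete gap.

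For comparison, the paper's proof runs as follows. Arguing by contradiction, if infinitely many identity components of isotropy groups occur, one can choose infinitely many subtori $T_i$ of $G$ together with pairwise disjoint connected components $X_i$ of the fixed-point sets $X^{T_i}$ such that no point of $X_i$ is fixed by a strictly larger torus. The key device is the equivariant Thom class $u_i \in H^{\ast}_G(X)$ of the submanifold $X_i$: it is a nonzero class of degree equal to the codimension of $X_i$ in $X$, supported on $X_i$, so its restriction to the complement of a small neighborhood of $X_i$ vanishes. Disjointness of the $X_i$ then forces the classes $u_i$ to be linearly independent, and since their degrees all lie in $\{0,1,\dots,\dim X\}$, some $H^k_G(X)$ would be infinite-dimensional, contradicting the hypothesis. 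If you want to complete your proposal, this is the idea you need to supply: a mechanism converting infinitely many distinct isotropy components into infinitely many linearly independent classes in boundedly many degrees.
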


\begin{proof}
Assume the contrary. Then we find infinitely many subtori $T_i$ of $T$,
together with some connected components $X_i\neq \emptyset$ of the set of
fixed points $X^{T_i}$, such that no point in $X_i$ is fixed by a torus
larger than $T_i$; the $T_i$ are not necessarily distinct, but the~$X_i$
can be chosen not to intersect. Then, in a small neighborhood $U_i$ of
$X_i$ the set of connected components of isotropy groups is finite. The
equivariant Thom class $u_i$ of~$X_i$ is a nonzero cohomology class in
$H^\ast_G(X)$ of degree equal to the codimension of $X_i$ in $M$, and
with support on $X_i$, thus its restriction to $X\setminus U_i$ is zero.
Therefore, the elements~$u_i$ are linearly independent. Hence, there
exists $k\leq \dim X$ such that $H^k_G(M)$ is infinite dimensional,
contradicting the finiteness assumption.
\end{proof}

\vskip .5cm

\vskip 1.5cm

\end{document}